\documentclass[psamsfonts]{amsart}
\usepackage{pgf,tikz}
\usepackage{mathrsfs}
\usetikzlibrary{arrows}
\usepackage{amsmath}
\usepackage{amsthm}
\usepackage{amssymb}
\usepackage{color}
\usepackage{amscd}
\usepackage{amssymb}
\usepackage{amsfonts}
\usepackage{amsbsy}
\usepackage{graphicx}
\usepackage[dvips]{psfrag}
\usepackage{array}
\usepackage{color}
\usepackage{epsfig}
\usepackage{url}
\usepackage{overpic}
\usepackage[pagewise]{lineno}

\usepackage[hidelinks]{hyperref}
\hypersetup{colorlinks=true,linkcolor=blue,citecolor=blue,urlcolor=blue}
\usepackage{subfigure}
\usepackage[update,prepend]{epstopdf}

\makeatletter
\@namedef{subjclassname@2020}{%
	\textup{2020} Mathematics Subject Classification}
\makeatother

\usepackage[top=20mm,bottom=20mm,left=20mm,right=20mm]{geometry}

\newcommand{\R}{\ensuremath{\mathbb{R}}}

\newcommand{\dis}{\displaystyle}

\newcommand{\vs}{\vspace{0,3cm}}

\newcommand{\V}{\mathcal{V}}

\newtheorem {theorem} {Theorem}
\newtheorem {definition} {Definition}

\newtheorem {lemma}  {Lemma}
\newtheorem {example}{Example}
\newtheorem {remark} {Remark}

\begin{document}

\renewcommand{\arraystretch}{1.5}

\title[Generic one-parameter families of 3-dimensional Filippov Systems]{Generic one-parameter families of 3-dimensional Filippov Systems}
	
\author[ R. D. Euz\'{e}bio] {Rodrigo D. Euz\'{e}bio$^1$}
\address{$^1$ Instituto de Matem\'{a}tica e Estat\'{i}stica, Universidade Federal de Goi\'{a}s, Avenida Esperan\c{c}a s/n, Campus Samambaia, CEP 74690-900, Goi\^{a}nia, Goi\'{a}s, Brazil.}
\email{euzebio@ufg.br}
\email{djtonon@ufg.br}
	
\author[M. A. Teixeira] {Marco A. Teixeira$^2$}
\address{$^2$ Departamento de Matem\'{a}tica, IMECC, Universidade Estadual de Cam\-pi\-nas Rua S\'{e}rgio Buarque de Holanda 651, Cidade Universit\'{a}ria - Bar\~{a}o Ge\-ral\-do, 6065, Campinas, SP, Brazil.}
\email{teixeira@ime.unicamp.br}
	
\author[D. J. Tonon] {Durval J. Tonon$^1$}
	
\subjclass[2020]{37G05, 37G10, 37G15, 37G35, 37G40} 
\keywords{singularity, piecewise smooth vector fields, bifurcations, codimension, unfolding, structural stability}
\date{}

\maketitle
	
\begin{abstract}
This paper addresses openness, density and structural stability conditions of one-parameter families of 3D piecewise smooth vector fields (PSVFs) defined around typical singularities. Our treatment is local and the switching set, $M$, is a $2D$ surface embedded in $\mathbb{R}^3$. In short, we analyze the robustness and normal forms of certain codimension one singularities that occur in PSVFs. The main machinery used in this paper involves the theory of contact between a vector field and $M$, Bifurcation Theory and the Topology of Manifolds. Our main result states robust mathematical statements resembling the classical Kupka-Smale Theorem in the sense that we establish the openness and density of a large class of PSVFs presenting generic and quasi-generic singularities.  Due to the lack of uniqueness of certain solutions associated with PSVFs, we employ Filippov's theory as the basis of our approach throughout the paper.
\end{abstract}

\section{Introduction}
The central focus of local bifurcation theory for vector fields is to determine appropriate conditions under which the stability of trajectories near singularities can be guaranteed. These vector fields generate systems of differential equations that can be locally expressed as  $\dot{x} = f(x, \lambda)$, where \(\lambda\) is a bifurcation parameter. For foundational material on bifurcation theory, see, for instance, \cite{Aframovich1994, Wiggins2003}.

The study of local bifurcation phenomena in PSVFs in dimensions higher than two remains underdeveloped. In general, the analysis of piecewise smooth bifurcations becomes increasingly complex as the dimension of the phase space increases, leading to a proliferation of cases and subcases. Nonetheless, several real-world phenomena across different scientific disciplines are modeled by PSVFs and warrant systematic investigation. For instance, relevant applications can be found in control theory \cite{Utkin2016}, electrical and electronic systems \cite{Chillingworth2002, CristianoPaganoCarvalhoTonon2019}, biological modeling \cite{GouzeSari2003}, and alternative protocols for disease treatment \cite{deCarvalhoCristianoGoncalvesTonon2020}.

In this paper, we employ well-established results from both smooth and piecewise-smooth bifurcation theory to study local bifurcations in 3D PSVFs. There are several approaches to deal with PSVFs (see, for instance, \cite{diBernardo2008,Jeffrey2018}), two of the most commonly employed being those due to Filippov and Utkin (see \cite{Filippov1988, Utkin1977}). While the work of Utkin is more suitable for applications than Filippov's, particularly in control theory, the convention introduced by Filippov seems to be more appropriate for formal aspects of PSVFs, such as the study of bifurcations and normal forms. Ultimately, both formalisms are equivalent when the control function introduced by Utkin is linear, which makes Filippov suitable for applications in some scenarios such as control theory, relay systems, and linear systems, to cite a few; see, for instance, \cite{Bonet2017}. Due to the goals of the paper, we adopt the Filippov convention, since the majority of the background material used to construct the results also considers this convention. In addition, for the same reasons, we consider the original approach of Filippov by taking convex linear combinations of vector fields, rather than considering differential inclusions or hidden dynamics, for instance. We refer to the work of Jeffrey for a broad and modern perspective on PSVFs (see, for instance, \cite{ColomboJeffrey2011a, Jeffrey2014,  JeffreyPhysicaD2014, Jeffrey2018}). For background on the theoretical framework employed in this paper, we refer to \cite{GomideTeixeira2020, Sotomayor1974}.

We define a codimension-one surface \(M = h^{-1}(0)\), where \(0\) is a regular value of a smooth function \(h: \mathbb{R}^3 \rightarrow \mathbb{R}\). This surface divides \(\mathbb{R}^3\) into two half-spaces: the ``+'' half-space where \(h(x) \geq 0\), and the ``-'' half-space where \(h(x) \leq 0\). We denote by \(\Omega^r\) the space of PSVFs \(X = (X^+, X^-)\), endowed with the product topology. Here, \(X^+\) and \(X^-\) are vector fields defined on \(\mathbb{R}^3\), but are restricted to the ``+'' and ``-'' half-spaces respectively (see Section \ref{secao-campos-descontinuo} for details).

In the context of smooth vector fields, there is a well-developed theory offering detailed analysis of bifurcation scenarios, including normal forms, unfoldings, and codimension classification (see \cite{Dumortier1977, Peixoto1962, Sotomayor1974} for dimension two, and \cite{SotomayorTeixeira1988} for dimension three). However, for PSVFs, the available results are far more limited. In two dimensions, there has been some progress in classifying codimension zero, one, and two phenomena, as well as in constructing normal forms and establishing topological equivalence between a PSVFs and its corresponding normal form \cite{CarvalhoCardosoTonon2018, deCarvalhoTonon2014, GuardiaSearaTeixeira2011, Kuznetsov2003}. For example, Novaes et al. \cite{NovaesTeixeiraZeli2018} analyzed the bifurcation diagram of a closed trajectory connecting a two-fold singularity to itself, characterizing the scenario as a codimension-two phenomenon and providing a submersion map to describe it.

In three dimensions, some codimension zero subsets of structurally stable PSVFs were described in \cite{deCarvalhoTonon2015, TeixeiraGomide2018}, building on results from Vishik \cite{Visik1972}. A major challenge in extending bifurcation theory to three-dimensional PSVFs is the emergence of new types of singularities, notably the T-singularity (or two-fold singularity), which may lead to chaotic behavior, invariant cones, or infinite instability modules (see \cite{ColomboJeffrey2011a, GomideTeixeira2020}). Further global bifurcations and the existence of horseshoes have also been investigated in \cite{GomideTeixeira2022}.

We also address the problem of structural stability, a fundamental notion in dynamical systems theory due to its relevance in both theoretical and practical contexts. Observing different dynamical behaviors in nearby systems highlights the importance of understanding which models are structurally stable.

Finally, we note that key developments in PSVFs theory have been inspired by the study of vector fields near surface boundaries. In this regard, important contributions were made by Sotomayor and Teixeira \cite{SotomayorTeixeira1988} and Vishik \cite{Visik1972}. In particular, Vishik's classification of generic boundary points in three dimensions, establishing cusps and folds as generic singularities, has provided valuable tools and insights through the application of singularity theory.

We denote by $\Omega^r$ the set of all PSVFs, and by $\Omega^T \subset \Omega^r$ the subset of PSVFs for which \emph{both} vector fields $X^{\pm}$ are tangent to $M$ at some point. Let $\Xi_0$ be the set of PSVFs of codimension zero, and define $\Omega_1^r = \Omega^r \setminus (\Xi_0 \cup \Omega^T)$, which we call the bifurcation set. For further details, see Sections~\ref{secao-preliminares} and~\ref{secao-main-results}.

The main results of this paper identify the subset of PSVFs in $\Omega_1^r$ that is structurally stable (within $\Omega_1^r$) and show that it forms a codimension one submanifold of $\Omega^r$. We explicitly characterize the one parameter families of PSVFs that constitute this codimension one submanifold and provide normal forms and unfoldings for each family. Moreover, we prove that this submanifold is open, dense, and structurally stable in $\Omega_1^r$. Finally, to obtain a fully explicit characterization of this codimension one submanifold of $\Omega^r$, we construct a smooth map $\eta : \Omega^r \to \mathbb{R}$ that defines a submersion along each of the considered one-parameter families.

Accordingly, the main contributions of this work are as follows: $(1)$ the identification of a codimension-one submanifold of $\Omega^r$ that is structurally stable within $\Omega_1^r$; $(2)$ the derivation of normal forms and unfoldings for each associated one-parameter family;
$(3)$ the proof that this submanifold is open and dense in $\Omega_1^r$; and $(4)$ the explicit construction of a submersion $\eta : \Omega^r \to \mathbb{R}$ that characterizes each of the considered one-parameter families.

We emphasize that, to the best of our knowledge, previous results in the literature do not provide a systematic framework for bifurcation analysis in the context of 3D PSVFs.

The organization of the paper is as follows: In Section \ref{secao-preliminares} some preliminaries, definitions, and notations are presented and in Section \ref{secao-main-results} the main results of the paper are stated. In Section \ref{secao-codzeroUm} we present and discuss geometric ingredients of some subsets of PSVFs that are of codimension zero and one. In Sections \ref{proof-MainResults} and \ref{prova-teo2} we provide the proof of the main results of this paper. Finally in Appendix \ref{Apendice} we develop some results about the bifurcation theory. 



\section{Basic theory}\label{secao-preliminares}

\subsection{Vector fields near the boundary of a 3-manifold}\label{subsecao-campobordo}

Details on  the concepts and results treated in this subsection are concentrated in \cite{SotomayorTeixeira1988, Visik1972}. We assume that, locally, the boundary of $\R^3$ near to the origin is given by $M=h^{-1}(0)$ where $h:\R^3 \rightarrow \R$ is a smooth map possessing the origin as a regular point. In the following, we present the definition of germ of smooth vector fields.

\begin{definition}
Consider $Z$ a $C^r$ vector field and a germ of a vector field of class $C^r$ in a neighborhood of the origin in $\R^3$ is an equivalence class of $C^r$ vector fields defined in a neighborhood of the origin in the following way: two $C^r$ vector fields $Z_1$ and $Z_2$ are equivalent if $Z_1$ and $Z_2$ are defined in neighborhoods $U_1$ and $U_2$ of a compact neighborhood of the origin in $\R^3$, respectively, there exists a neighborhood $U_3\subset \R^3$ such that $0\in U_3\subset U_1\cap U_2$ and $Z_1|_{U_3} =Z_2|_{U_3}$.
\end{definition}
We use the notation $(\R^3,0)$ (in the germ sense) to stress that we consider local dynamics in the neighborhood of the origin. In this way, the smooth vector field $Z$ is a representant of this equivalence class. The set of all germs of $C^r$ vector fields defined in a neighborhood of the origin endowed with the $C^r $-topology, with $r\geq 1$, is denoted by $\mathfrak{X}^r$. For each $Z \in \mathfrak{X}^r$ we define a smooth function $Zh:\R^3\rightarrow\R$ by $Zh=\langle Z , \nabla h \rangle$ where $\langle . , . \rangle$ is the canonical inner product in $\R^3$. We denote $Z^nh(p)=Z(Z^{n-1}h)(p), n\geq 2$.

\begin{definition}
	Let $Z \in \mathfrak{X}^r$. We say that:
	\begin{itemize}
		\item [$(a)$] $0$ is $M$-regular point of $Z$ if $Zh(0)\neq 0$.
		\item [$(b)$] $0$ is a fold singularity of $Z$ if $Zh(0)=0$ and $Z^2h(0)\neq 0$.
		\item [$(c)$] $0$ is a cusp singularity of $Z$ if $Zh(0)=Z^2h(0)=0, Z^3h(0)\neq 0$ and the vectors $dh(0)$, $dZh(0)$ and $dZ^2h(0)$ are linearly independent.
\end{itemize}\end{definition}
The tangential set of $Z$,
$$
S_Z=\{p\in M; Zh(p)=0\},
$$
and any element in $S_Z$  is a \emph{tangential singularities} (or simply a singularity) of $Z$. In the following, we characterize the generic behavior of smooth vector fields in $(\R^3,0)$ relative to $M$. The approach is based in \cite{SotomayorTeixeira1988, Visik1972}. 

\begin{definition}
	We denote $\Sigma_0$ the set of elements $Z\in \mathfrak{X}^r$ satisfying one of the following conditions:
	\begin{itemize}
		\item [$(a)$] $0$ is a $M$-regular point of $Z$ $(Zh(0)\neq 0)$;
		\item [$(b)$] $0$ is a fold singularity of $Z$;
		\item [$(c)$] $0$ is a cusp singularity of $Z$.
	\end{itemize}
	\label{definicao-Sigma0}
\end{definition}
In \cite{SotomayorTeixeira1988} was proved that the subset $\Sigma_0$ is open, dense and it characterizes the structural stability in $\mathfrak{X}^r$. The next step is to study the bifurcation set $\mathfrak{X}^r_1=\mathfrak{X}^r-\Sigma_0$. As usual, the characterization of $\Sigma_1\subset \mathfrak{X}^r_1$, the subset composed of codimension one smooth vector fields, is based on certain issues involving unstable vector fields without rejecting a generic context. That means that certain conditions imposed on the definition of $\Sigma_0$ are violated but quasi-generic assumptions are considered.

In our approach we are focus in local dynamics, in this way quasi-generic means the condition when a point $p$ is a codimension one singularity. In other words, quasi-generic we mean that the PSVFs fails to satisfy {\it exactly} one generic condition, corresponding to a codimension-one degeneracy, while all other conditions remain generic.

We illustrate such a situation. For example, let $Z$ be off $\Sigma_0$. So we may consider the cases: $(i)$ $Z(0)= 0$ or $(ii)$ $Z(0) \neq 0$ and it does not satisfy one of the conditions $(a), (b)$ and $(c)$ in Definition \ref{definicao-Sigma0}. In the first case, we impose that $0$ is a hyperbolic singular point of $Z$ with eigenspaces transversal to the boundary. In case $(ii)$, $0$ cannot be a fold or a cusp but some extra quasi-generic conditions must be assumed.

\begin{definition}
	Call $\Sigma_1$ the set $\Sigma_1(a)\cup \Sigma_1(b)\subset \mathfrak{X}^r$, where:
	\begin{itemize}
		\item [1-] $\Sigma_1(a)\subset \mathfrak{X}^r$ is the set of smooth vector fields $Z$ such that $0$ is a hyperbolic singular point, the eigenvectors are transversal to $M$ at $0$, the eigenvalues of $DZ(0)$ have algebraic multiplicity 1. The real parts of the non-conjugated eigenvalues are distinct: if $\lambda_i\neq \lambda_j$, $\lambda_i\neq \overline{\lambda}_j$, then $Re(\lambda_i)\neq Re(\lambda_j)$, where $\lambda_i,\lambda_j$ are eigenvalues of $DZ(0)$ and $\overline{\lambda}_j$ is the conjugated of the number $\lambda_j$. 
		
		\item [2-] $\Sigma_1(b)\subset \mathfrak{X}^r$ is the set of smooth vector fields $Z$ such that $Z(0)\neq 0, (Zh)(0)=0=(Z^2h)(0)$ and one of the following conditions are valid:
		\begin{itemize}
			\item [$(b.1)$] $(Z^3h)(0)\neq 0, \dim\{dh(0), d(Zh)(0), d(Z^2h)(0)\}=2$ and 0 is a nondegenerate critical point in the Morse's sense of $Zh\mid_{M}$;
			
			\item [$(b.2)$] $(Z^3h)(0)=0, (Z^4h)(0)\neq 0$ and $0$ is a regular point of $Zh\mid_{M}$.
		\end{itemize}\label{b1}\end{itemize}
\end{definition}

Let $H(h)$ be the Hessian matrix of the function $h$. 

\begin{definition} If $Z\in \Sigma_1(a)$ then we distinguish the cases:
	\begin{itemize}
		\item [$(a.1)$] {\bf Node}: the eigenvalues of $DZ(0),\lambda_j, j=1,2,3$ are real distinct and have the same sign;
		
		\item [$(a.2)$] {\bf Saddle}: the eigenvalues of $DZ(0),\lambda_j, j=1,2,3$ are real distinct and one of them has an opposite sign compared to the others;
		
		\item [$(a.3)$] {\bf Focus}: the eigenvalues of $DZ(0)$ are $\lambda_{12}=a \pm i b,\lambda_3=c$ with $a,b,c$ distinct of zero and $a \neq c$.
	\end{itemize}
	If $Z\in \Sigma_1(b)$ then we distinguish the cases:
	\begin{itemize}
		\item [$(b.1.1)$] {\bf Lips} given in Definition \ref{b1} $(b.1)$ with $\det(H(Zh(0)))>0$, see Figure \ref{lips};
		
		\item [$(b.1.2)$] {\bf Beak to beak} given in Definition \ref{b1} $(b.1)$ with $det(H(Zh(0)))<0$, see Figure \ref{beac_to_beac};
		
		\item [$(b.1.3)$] {\bf Swallowtail} given in Definition \ref{b1} $(b.2)$, see Figure \ref{swallowtail}.
	\end{itemize}
	\label{definicao-tipos-singularidade}\end{definition}

\subsection{Piecewise smooth vector fields}\label{secao-campos-descontinuo} In the PSVFs context we consider $M$, namely the switching manifold (or discontinuity manifold), as common boundary of both smooth vector fields $X^+$ and $X^-$.  We define $M^+=\{p\in (\R^3,0); h(p)\geq 0\}$ and $M^-=\{p\in (\R^3, 0); h(p)\leq 0\}$ and we consider in the previous definitions $X(p)=X^+(p)$ or $X(p)=X^-(p)$, depending on $p\in M^+$ or $p \in M^-$, resp. The definition of a germ of PSVFs was stated in \cite{GomideTeixeira2020}. 

\begin{definition}
A $C^r$-germ of a PSVFs is an equivalence class $X=(X^+, X^-)$ of two $C^r$ vector fields defined in the following: $X_1=(X_1^+, X_1^-)$ are equivalent to $X_2=(X_2^+, X_2^-)$ if, and only if, $X_1^{\pm}$ is germ equivalent to $X_2^{\pm}$.
\end{definition}
We define a PSVFs given by 
\[
X(q)= \left\{
\begin{array}{l}X^+(q), q\in M^+\\
	X^-(q), q\in M^-,
\end{array}
\right.
\]
where $X^{\pm}\in\mathfrak{X}^r$. Call $\Omega^r=\mathfrak{X}^r\times \mathfrak{X}^r$ the space of all germs of PSVFs $X$ at $0$ endowed with the product topology and we denote by $X=(X^+,X^-)$. In $M$, we distinguish the following open regions:

\noindent$\bullet$ \textbf{Crossing region:} $M^c=\{p\in M;(X^+h)(p)(X^-h)(p)>0\}$. When convenient we denote $M^c_+=\{p\in M;$ $(X^+h)(p)>0,(X^-h)(p)>0\}$ and $M^c_-=\{p\in M;(X^+h)(p)<0,(X^-h)(p)<0\}$.

\noindent $\bullet$ \textbf{Unstable sliding region:} $M^e=\{p\in M;(X^+h)(p)>0, (X^-h)(p)<0\}$.

\noindent$\bullet$ \textbf{Stable sliding region:} $M^s=\{p\in M;(X^+h)(p)<0,(X^-h)(p)>0\}$.

\noindent Let $\mathcal{O}= M^c \cup M^e\cup M^s$. Observe that for
any $p\in \mathcal{O}$ we get $X^+(p)\neq 0$ and $X^-(p) \neq 0$ and if $p\in M^s$ then $\langle(X^- - X^+)(p),\nabla h(p)\rangle>0$.

The following definitions of trajectory-solutions at points in the swi\-tching manifold $M$ are given by Filippov's convention, stated in \cite{Filippov1988}. The \textit{sliding vector field} associated with $X$ is the vector field tangent to $M^{s,e}$. It is clear that if $q\in M^s$ then $q\in M^e$ for $-X$ and we can define the
{\it sliding vector field} on $M^e$ associated with $X$ by
$-(-X)^s$. In what follows we use the notation $\widetilde{X}^s$ for both cases. Explicitly, the vector field $\widetilde{X}^s$ is
given by
\begin{equation}\label{eq campo filippov}
	\widetilde{X}^s(p)=\dfrac{1}{X^-h(p)-X^+h(p)}( X^-h(p) X^+(p) - X^+h(p)  X^-(p)),
\end{equation}
where $X^{\pm}=(X^{\pm}_1,X^{\pm}_2,X^{\pm}_3)$. It is clear that the vector field $\widetilde{X}^s$ is topologically equivalent to the \emph{rescaling  vector field} given by 
\begin{equation}
	X^s(p)=X^-h(p) X^+(p) - X^+h(p)  X^-(p)
	\label{definicao-campodeslizante}
\end{equation}
for $p\in M^s$ and it is topologically equivalent to $-X^s(p)$ for $p\in M^e$. Therefore, either $X^s$ or $-X^s$ can be $C^r$-extended beyond the boundary of $M^s \cup M^e$ while preserving the orientation of $\widetilde{X}^s$. Throughout the paper, the trajectory of $X$ through a point $p$ on the boundary of $M^s$ is defined as the concatenation of the trajectory of $X^s$ with the orbit of $X^+$ or $X^-$ connecting to $p$, whenever such an orbit exists. Hence, trajectories are concatenated by arcs of $X^s$, $X^+$, and $X^-$. A similar situation occurs for points on the boundary of $M^e$.


\begin{definition}
	The point $p\in M^s\cup M^e$ is said a pseudo-equilibrium if $X^s(p)=0$.
\end{definition}
As said above, Filippov's convention describes three basic forms of dy\-na\-mics that would occur on the switching surface: crossing, stable sliding, and unstable sliding. 

\begin{definition}
The flow of the $X$, denoted by $\phi_X$, is obtained by the concatenation of flows of $X^+, X^-$ and $X^s$, denoted by $\phi_{X^+}, \phi_{X^-}$ and $\phi_{X^s}$,respectively.
\end{definition}

\begin{definition}
We say that $0\in M$ is:
	
$\bullet$ a regular-regular  point of $X$ if it is a regular point of both vector fields $X^+$ and $X^-$ and: $0\in M^c$ or $0\in [M^s\cup M^e]$ and $X^s(0)\neq 0$;

$\bullet$ a two-fold singularity of $X$ if it is a fold singularity of both vector fields $X^+$ and $X^-$;
	
$\bullet$ a fold-cusp singularity of $X$ if it is a cusp singularity of $X^+$ and a fold singularity of $X^-$.
\end{definition}
In the following, we distinguish the types of two-fold singularities:
\begin{itemize}
	\item [$(a)$] \emph{Parabolic}: either $[(X^+)^2h(0)\cdot (X^-)^2h(0)]>0$;
	\item [$(b)$] \emph{Hyperbolic}: $(X^+)^2h(0)>0$ and $(X^-)^2h(0)<0$;
	\item [$(c)$] \emph{Elliptic}: $(X^+)^2h(0)<0$ and $(X^-)^2h(0)>0$.
\end{itemize}
%

\subsection{The involution and the first return map associated to the two-fold singularities}\label{secao-primeiro-retorno}
The following construction is mainly based on issues developed in \cite{Teixeira1990}. Consider $X^+\in \mathfrak{X}^r$ and suppose that $X^+(0)\neq 0$ and $X^+h(0)=0$. Let $N_0$ be a transversal section to $X^+$ at $0$. Now by using the Implicit Function Theorem, there exist neighborhoods $U_{X^+}(0) \subset M$ and $V_{X^+}(0) \subset N_0$ and a smooth function $\eta_{X^+}: U_{X^+}(0)\subset M\rightarrow V_{X^+}(0)$, defined by for each $p \in U_{X^+}(0) \subset M, \eta_{X^+}(p)$ is the point where the trajectory of $X^+$ through $p$ meets $N_0$. Of course $\eta_{X^+}(0)=0$ and the singular set of $\eta_{X^+}$ coincides with $S_{X^+}$, the tangency points of $X^+$. When $\eta_{X^+}$ is a fold mapping (in the sense of the theory of the singularities of mappings) one determines the symmetric diffeomorphism $\gamma_{X^+}$ that satisfies $\eta_{X^+} \circ \gamma_{X^+}=\eta_{X^+}$. The mapping $\gamma_{X^+}$ is an involution and the fixed points of it are concentrated on $S_{X^+}$. Associated to $X^-\in \mathfrak{X}^r$ we get similarly objects such as $U_{X^-}(0), S_{X^-}, \eta_{X^-}$ and $\gamma_{X^-}$. Now given an elliptic two-fold singularity of $X=(X^+,X^-)$ we may consider the composition $\varphi_X=\gamma_{X^-}\circ \gamma_{X^+}$ defined in the intersections of domains $U_{X^+}(0)$ and $U_{X^-}(0)$, that works as first return map associated to $M$ at $0$. For more details, the references \cite{Teixeira1990} are recommended to the interested reader.

We denote by $L_X^{}$ the linear part of $\varphi_X$. Note that $\varphi_X^{}$ satisfies: $det(D\varphi_X^{})(0)$ $=1$. So generically the eigen\-va\-lues of $D\varphi(0)$ are $\beta$ and $\beta^{-1}$, which satisfy one of the following conditions:
\begin{itemize}
\item [$(a)$] $\beta\in \R$ and $\beta\neq 0$ (saddle type).

\item [$(b)$] $\beta=e^{i\,\theta}$ with $\theta\in (0,\pi)$ (elliptical type). In this case, $L_X^{}$ is a rotation.
\end{itemize}

\section{Main results}\label{secao-main-results}
In this section we state some results involving the subset $\Xi_1\subset \Omega_1^r$. For more details about the construction/definition of $\Xi_1$ and $\Omega_1^r$ see Section \ref{secao-codzeroUm}. Consider the PSVFs of the form $X=(X^+,X^-)$ having a singular, regular point for $X^+, X^-$, respectively, at the origin and satisfies the extra-conditions:
\begin{itemize}
	\item [{\bf (H)}] 
	\begin{itemize}
		\item [(a)] $X^-(0)$ is transversal to $M$ at $0$;
		\item[(b)]  the origin is a hyperbolic singular point of $X^+$ and $X^s$;
		\item[(c)] the eigenvalues of $DX^+(0)$ and $DX^s(0)$ are of algebraic multiplicity 1;
		\item[(d)] the eigenvectors associated to $DX^+(0)$ are transversal to $M$ at the origin.\end{itemize}
\end{itemize}

In the following, we consider another case in which both vector fields $X^{\pm}$ are transversal to $M$ at the origin, the switching surface coincides with $M^{s,e}$, and the origin is a singular point of the sliding vector field that is not hyperbolic; see, for instance, \cite{Sotomayor1974}.

\begin{remark}
 Note that if the origin is a hyperbolic singular point of $X^s$ then the PSVFs $X$ would be a structural stable at the origin, see Proposition 3.5 in \cite{GuardiaSearaTeixeira2011}. 
\end{remark}
Therefore, there are two cases to be considered: $(1)$ the origin is a saddle-node singular point for $X^s$ and $(2)$ the origin is a Hopf singular point for $X^s$. In the last case, we say that a PSVFs $X=(X^+,X^-)$ presenting a Hopf singular point for $X^s$ at the origin if it satisfies the following extra-conditions:
\begin{itemize}
\item [{\bf (HS)}] 
\begin{itemize}
\item[(a)] the origin is the unique singular point of $X^s$;

\item[(b)] $X^s$ has only hyperbolic periodic trajectories;

\item[(c)] The $\alpha$ and $\omega-$limit sets of any trajectory of $X$ are singular points and periodic trajectories;

\item[(d)] $X^s$ has no saddle connections.
\end{itemize}
\end{itemize}
In this way, we define $\Xi_1=\dis\bigcup_{i=1}^6\Xi_1(i)$ where

$\Xi_1(1) =\{X\in \Omega^r; 0 \mbox{ is a lips singularity of } X^{+} \mbox{ and a regular point of } X^-\}$, see Definition \ref{definicao-tipos-singularidade};

$\Xi_1(2) =\{X\in \Omega^r; 0 \mbox{ is a beak to beak singularity of } X^{+} \mbox{ and a regular point of } X^-\}$;

$\Xi_1(3) =\{X\in \Omega^r; 0 \mbox{ is a swallowtail singularity of } X^{+} \mbox{ and a regular point of } X^-\}$;

$\Xi_1(4) =\{X\in \Omega^r; 0 \mbox{ is a singular point of } X^+,  \mbox{ a regular point of } X^- \mbox{ and it satisfies conditions {\bf (H)}}\}$;

$\Xi_1(5) =\{X\in \Omega^r; 0 \mbox{ is a  regular point of } X^{\pm} \mbox{ and a saddle-node singular point of } X^s\}$.

$\Xi_1(6) =\{X\in \Omega^r; 0 \mbox{ is a  regular point of } X^{\pm}, \mbox{ a Hopf singular point of } X^s \mbox{ it satisfies conditions {\bf (HS)}}\}$.

\vspace{0,5cm}

We fix that the switching manifold $M=h^{-1}(0)=\{(x_1, x_2, x_3)\in (\R^3, 0); x_3=0\}$, i.e., $h(x_1, x_2, x_3)=x_3$. In the following we present the main results of this paper.

\begin{theorem}\label{teorema-lista-cod1}
	The subset $\Xi_1$ is a $C^{r}$ submanifold of codimension one on $\Omega^r$. In addition,
	\begin{itemize}
		\item [$(a)$] $X\in \Omega_1^r$ is $M$-stable relative to $\Omega_1^r$ if and only if $X\in \Xi_1$;
		
		\item [$(b)$] $\Xi_1$ is open and dense in $\Omega_1^r$;
		
		\item [$(c)$] there are neighborhoods $\mathcal{V}_{X} \subset \Omega^r$ of $X\in \Omega_1^r$, $U_0\subset (\R^3,0)$ of $0$ and a $C^r$ mapping $\eta:\mathcal{V}_{X} \rightarrow \R$ such that 
		\begin{itemize}
			\item [$1)$] If $\widetilde{X}\in\mathcal{V}_{X}$, $\widetilde{X}|_{U_0}$ is topologically equivalent to $X|_{U_0}$ if and only if $\eta(\widetilde{X})=0$;
			
			\item [$2)$] If $X_{\lambda_1}, X_{\lambda_2}\in \mathcal{V}_{X}$ and $\eta(X_{\lambda_1})\cdot \eta(X_{\lambda_2})>0$ then $X_{\lambda_1}|_{U_0}$ is topologically equivalent to $X_{\lambda_2}|_{U_0}$.
		\end{itemize}
	\end{itemize}
\end{theorem}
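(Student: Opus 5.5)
The plan is to argue case by case over the six strata $\Xi_1(1),\dots,\Xi_1(6)$. Each stratum is cut out locally by one scalar equation in the jets of $X^\pm$ at the origin, and that equation gives the function $\eta$. Concretely, I would take the following defining equations.
\begin{itemize}
\item For $\Xi_1(1),\Xi_1(2)$: $\eta(X)$ is the critical value of $X^+h|_{M}$ at its unique Morse critical point $p(X)$ near $0$. The point $p(X)$ exists and depends $C^r$ on $X$ by the implicit function theorem applied to $d(X^+h|_M)=0$.
\item For $\Xi_1(3)$: $\eta(X)$ is $(X^+)^3h$ evaluated at the point of $S_{X^+}$ where $(X^+)^2h$ vanishes. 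That point is unique near $0$ because $0$ is a regular point of $X^+h|_M$.
\item For $\Xi_1(4)$: $\eta(X)=h(q(X))$, where $q(X)$ is the continuation of the hyperbolic singularity of $X^+$.
\item For $\Xi_1(5)$: $\eta(X)$ is the saddle-node unfolding function of $X^s$, i.e.\ the value of $X^s$ along the centre manifold at its critical point, in the sense of Sotomayor \cite{Sotomayor1974}.
\item For $\Xi_1(6)$: $\eta(X)=\operatorname{Re}\lambda(X)$, the real part of the continued complex eigenvalue of $DX^s$.
\end{itemize}

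In each case I would check that $d\eta(X)\neq0$ by exhibiting an explicit one-parameter perturbation, e.g.\ $X^+\mapsto X^++(0,0,\varepsilon)$, for which $\partial_\varepsilon\eta\neq0$. For $\Xi_1(5)$ and $\Xi_1(6)$ the perturbation is chosen so that $X^s$ moves transversally in the classical sense. The nondegeneracy conditions in Definition \ref{b1}, in {\bf (H)} and in {\bf (HS)} are exactly what make the implicit function theorem apply. The regular value theorem then gives that $\Xi_1=\eta^{-1}(0)$ is locally a $C^r$ codimension-one submanifold.

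For $(c)$ and the ``if'' direction of $(a)$, I would reduce to normal forms. For the first three strata, I would put $X^+$ into the Vishik/Sotomayor--Teixeira normal forms of lips, beak-to-beak and swallowtail (the $\Sigma_1(b)$ classification for smooth fields on a manifold with boundary) with unfolding parameter $\eta$. Since $X^-$ is transversal at $0$, the PSVF is determined up to equivalence by the $X^+$-contact structure plus the crossing/sliding partition of $M$ and the sliding field. I would build the equivalence on $M$ first, matching the tangency curves, the regions $M^c,M^s,M^e$ and the sliding orbits, and then extend it along the flows of $X^\pm$ off $M$. For $\Xi_1(4)$ I would use the Hartman--Grobman theorem together with transversality of the eigenvectors to $M$, as in \cite{SotomayorTeixeira1988}, glued to the hyperbolic sliding field. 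For $\Xi_1(5),\Xi_1(6)$, $X^\pm$ are transversal, so $X$ is equivalent to its sliding field on $M$ extended by straight crossing/sliding flows. Classical saddle-node and Hopf unfolding theory then gives the sign-of-$\eta$ classification, using {\bf (HS)} to exclude extra limit sets. This yields $(c)1$--$2$. Openness in $(b)$ follows because every defining condition is an open condition on jets inside $\eta^{-1}(0)$.

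For density in $(b)$ and the ``only if'' of $(a)$, I would use Thom transversality on the jet space of $(X^+,X^-)$ at $0$. Any $X\in\Omega_1^r$ violates some condition of $\Xi_0$. After removing $\Omega^T$, at most one of $X^\pm$ is tangent at $0$, or both are transversal and $X^s$ degenerates. Arbitrarily small perturbations make all remaining conditions generic, so codimension-one degeneracies are dense in $\Omega_1^r$ and are exactly those listed. Conversely, if $X\in\Omega_1^r\setminus\Xi_1$, it has codimension $\geq2$. Any neighbourhood then contains elements of two different $\Xi_1(i)$ strata, or of the same stratum with different $\eta$-unfolding behaviour, so $X$ is not $M$-stable relative to $\Omega_1^r$.

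The main obstacle I expect is the construction of the topological equivalences in the tangency strata, especially lips and beak-to-beak. There the sliding vector field $X^s$ degenerates along the fold curves of $X^+$, and the equivalence must match the sliding dynamics near the birth and death of the two fold branches. My first attempt would be to pass to the normal forms, check that the rescaled $X^s$ has no pseudo-equilibria near $0$ for small $\eta$, and then use a fibration by sliding orbits to define the homeomorphism on $M$ before lifting it.
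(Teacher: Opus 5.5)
Your overall architecture is the paper's: one scalar $\eta$ per stratum, built by the implicit function theorem and shown to be a submersion along an explicit curve, Sotomayor's planar saddle-node/Hopf theory applied to $X^s$ for $\Xi_1(5),\Xi_1(6)$, and perturbation arguments for density. For $\Xi_1(1)\cup\Xi_1(2)$ your $\eta$ differs from the paper's determinant-based construction, and it is the better choice. Your $\eta$ is the critical value of $X^+h|_M$. The condition $(X^+)^2h=0$ holds automatically at a zero-valued critical point of $X^+h|_M$, and $(0,0,\varepsilon)$ moves the critical value at unit rate. The paper's curve $X_0^++\lambda(0,0,x_2)$ leaves the critical value unchanged to first order, so it is tangent to the stratum.

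The swallowtail step, however, fails.
\begin{itemize}
\item At $p\in S_{X^+}$ with $(X^+)^2h(p)=0$, the vector $X^+(p)$ lies in $T_pM\cap\ker d(X^+h)(p)=T_pS_{X^+}$. Hence $(X^+)^3h(p)$ is the derivative of $(X^+)^2h|_{S_{X^+}}$ along $S_{X^+}$.
\item At a swallowtail point this derivative vanishes, so $(X^+)^2h|_{S_{X^+}}$ has a double zero there. Regularity of $X^+h|_M$ does not prevent this.
\item Under perturbation the double zero splits or disappears. For $X^+=(1,0,x_1^3+\varepsilon x_1-x_2)$ one has $(X^+)^2h=3x_1^2+\varepsilon$ on $S_{X^+}$. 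This gives two zeros $x_1=\pm\sqrt{-\varepsilon/3}$ (two cusps) for $\varepsilon<0$ and none for $\varepsilon>0$.
\end{itemize}
So ``the point where $(X^+)^2h$ vanishes'' is neither unique nor defined on a neighbourhood of $X_0$. Your $\eta$ (here $\pm 6\sqrt{-\varepsilon/3}$) is therefore not a $C^r$ function cutting out $\Xi_1(3)$.

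The fix is to swap the roles of the two functions, exactly as in your lips case. The map $(X^+h,(X^+)^3h)|_M$ has invertible derivative at $0$, because $X^+(0)$ spans $T_0S_{X^+}$ and $(X^+)^4h(0)\neq 0$. So it has a unique zero $q(X)$ near $0$, and $\eta(X)=(X^+)^2h(q(X))$ vanishes precisely on $\Xi_1(3)$.

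Your model direction $(0,0,\varepsilon)$ is not transverse for this $\eta$. In the normal form it only translates $S_{X^+}$ and leaves $(X^+)^2h=3x_1^2$ unchanged. Instead take $\varepsilon\,\ell\,\partial_{x_3}$ with $\ell(0)=0$ and $d\ell(0)X^+(0)=1$, which gives $d\eta/d\varepsilon=1$.

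The constant direction is also not safe for $\Xi_1(4)$. There it moves $h(q(X))$ at rate $-(DX^+(0)^{-1})_{33}$, which {\bf (H)} does not force to be nonzero. The translation $X^+(x_1,x_2,x_3-\varepsilon)$ used in the paper gives rate $1$.
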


In what follows, we show that the set \(\Xi_1\) can be characterized as the union of six codimension-one \(C^r\) manifolds $\Xi_1=\dis\bigcup_{i=1}^6 \Xi_1(i)$. The next result provides the normal forms and unfolding for the subsets of $\Xi_1=\dis\bigcup_{i=1}^6 \Xi_1(i)$.

\begin{theorem}\label{teorema-formas-normais}
Consider $X_0\in \Xi_1(i), i=1, \dots, 6$. The following statements hold.
\begin{itemize}
\item[$(a)$] If $X_0 \in [\Xi_1(1) \cup \Xi_1(2) ]$ then its topological normal form is  $X_{\lambda}=(X^+_{\lambda},X^-_0)$ where $X_{\lambda}(x_{1},x_{2},x_{3}) = (1,0,x_{1}^2 + \lambda x_2  \pm x_{2}^2)$ and $X^-_{0}(x_{1},x_{2},x_{3}) = (0,0, \pm 1)$.
		
\item [$(b)$] If $X_0 \in \Xi_1(3)$ then its topological normal form is $X_{\lambda}= (X^+_\lambda, X_0^-)$ where $X^+_\lambda(x_1,x_2,x_3)=(1, 0, x_{1}^3 - x_{2} + \lambda x_{1}^2)$ and $X_0^-(x_1,x_2,x_3)=(0,0,\pm 1)$.
		

\item [$(c)$] If $X_0 \in \Xi_1(4)$ then its topological normal form is $X_{\lambda}= (X^+_\lambda, X_0^-)$ where either $X^+_\lambda(x_1,x_2,$ $x_3)=(\alpha_1 x_1,x_1+\alpha_2 x_ 2,\alpha_3(x_3-\lambda)+x_2)$ (real case) or $X^+_\lambda(x_1,x_2,x_3)=(\lambda_1 x_1,x_1+\alpha x_2+x_3,-x_2+x_3(\alpha-\lambda))$, $\alpha\neq0$ (complex case) and $X_0^-(x_1,x_2,x_3)=(a,b,c)$, $a\neq0$.

\item [$(d)$] If $X_0\in \Xi_1(5)$ then its topological normal form is $X_{\lambda}= (X^+_\lambda, X_0^-)$ where $X^+_\lambda(x_1,x_2,x_3)=(-x_2,x_1^2-\lambda,0)$, and $X^-_0(x_1,x_2,x_3)=(1,0,1)$, which corresponds to the topological normal form $X_\lambda^s$ on $M$ at $(0,0,0)$ given by $X_\lambda^s(x_1,x_2)=(-x_2,x_1^2-\lambda,0)$.
		
\item [$(e)$] If $X_0\in \Xi_1(6)$ then its topological normal form is $X_{\lambda}= (X^+_\lambda, X_0^-)$ where $X^+_\lambda(x_1,x_2,x_3)=(x_1+\lambda x_2-x_2(x_1^2+x_2^2),-2\lambda x_1+x_2+x_1(x_1^2+x_2^2),0)$, and $X^-_0(x_1,x_2,x_3)=(1,0,1)$, which corresponds to the topological normal form $X_\lambda^s$ on $M$ at $(0,0,0)$ given by $X_\lambda^s(x_1,x_2)=(\lambda x_1-x_2-x_1(x_1^2+x_2^2),x_1+\lambda x_2-x_2(x_1^2+x_1^2),0)$.
\end{itemize}	

\end{theorem}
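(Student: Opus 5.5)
The plan is to treat each item of Theorem \ref{teorema-formas-normais} separately. In every case the argument has the same skeleton. First, use the transversality of $X_0^-$ at the origin and rectify it by a $C^r$ change of coordinates preserving $M=\{x_3=0\}$ (a flow-box adapted to $M$), so that $X_0^-$ becomes constant. Second, reduce $X_0^+$, or the sliding field $X^s$, to a prenormal form using the classical theory for smooth vector fields near a boundary ($\Sigma_1$, after \cite{SotomayorTeixeira1988, Visik1972}) or in the plane ($\Xi_1(5)$, $\Xi_1(6)$, after \cite{Sotomayor1974}). Third, construct a one-parameter unfolding $X_\lambda$ transversal to $\Xi_1(i)$. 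Finally, build an explicit topological equivalence between any nearby family and this model, preserving $M$ and the regions $M^c$, $M^s$, $M^e$.

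\textbf{Items (a) and (b).} Since $X_0^+(0)\neq0$, straighten $X_0^+$ to $(1,0,f(x))$ by a diffeomorphism preserving $M$. Then the only relevant datum is the function $X^+h|_M$ together with its iterates along the flow. The lips and beak-to-beak conditions in Definition \ref{b1}$(b.1)$ say that $f|_M$ has $Z^2h=0$, $Z^3h\neq0$ and a Morse critical point of $Zh|_M$. Applying the Morse lemma, in a parametrized form compatible with the flow direction, gives $x_1^2\pm x_2^2$. The swallowtail condition $(b.2)$, treated with the versal unfolding of the $A_3$ singularity, gives $x_1^3-x_2$. In both cases the unfolding parameter enters through a single term, $\lambda x_2$ or $\lambda x_1^2$, and transversality is checked by computing $\partial_\lambda$ of the defining condition; this will also yield the submersion $\eta$ of Theorem \ref{teorema-lista-cod1}. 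The topological equivalence with $X_0^-=(0,0,\pm1)$ then follows because the tangency set $S_{X^+}$ and the sliding/crossing regions determine the orbit structure locally, as in Vishik's classification.

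\textbf{Item (c).} Under \textbf{(H)}, Hartman–Grobman gives that $X^+$ is conjugate to its linear part. The transversality of the eigenvectors to $M$ allows us to choose the conjugacy preserving $M$, using the linear form $(\alpha_1x_1,x_1+\alpha_2x_2,\alpha_3x_3+x_2)$ or its complex analogue. Moving the equilibrium off $M$, via $x_3\mapsto x_3-\lambda$ or the shift of the real part, is the transverse unfolding: for $\lambda$ of one sign the equilibrium is real (in $M^+$), and for the other sign it is virtual. The hyperbolicity of $X^s$ guaranteed by \textbf{(H)}(b) fixes the sliding phase portrait.

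\textbf{Items (d) and (e).} Both $X^\pm$ are transversal, so the local dynamics is determined by $X^s$ on $M$, by Proposition 3.5 of \cite{GuardiaSearaTeixeira2011} together with the crossing/sliding structure. We therefore choose $X^+$ with vanishing third component and $X^-=(1,0,1)$, so that the sliding field is the planar field; since $X^+h=0$ identically, the expression in \eqref{definicao-campodeslizante} reduces to a multiple of $X^+$. We then invoke the planar saddle-node and Hopf normal forms of \cite{Sotomayor1974}, with \textbf{(HS)} ensuring that the Lyapunov coefficient is nonzero and the limit sets are controlled.

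\textbf{Main obstacle.} The hardest step is (a)–(b): making the Morse and $A_3$ reductions compatible with the flow of $X^+$ and with $M$ simultaneously. Here one needs a diffeomorphism of $M$ lifted along orbits. I would first build the equivalence on $M$ between the fold curves $S_{X^+}$, and then extend it by flow-transport, checking that the induced sliding fields agree up to orientation.
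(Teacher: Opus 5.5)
Your plan breaks at several concrete steps. In each case the cause is the family written in the statement, and the paper's proof does not repair it either.

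\textbf{Items (d)--(e).} You start by noting that both $X^\pm$ are transversal to $M$, and then take $X^+$ with vanishing third component. With $h=x_3$ this forces $X^+h\equiv0$. Then every point of $M$ is a tangency point of $X^+$, and there is no sliding region at all, since $M^s$ requires $X^+h<0<X^-h$. So the expression $X^-h\,X^+-X^+h\,X^-=X^+$ from \eqref{definicao-campodeslizante} is not a Filippov sliding field. Moreover, $X_0^+=(-x_2,x_1^2,0)$ vanishes at $0$, so $X_0\notin\Xi_1(5)$. Its linearization $(x_1,x_2)\mapsto(-x_2,0)$ is nilpotent, so $(-x_2,x_1^2)$ is a cusp point, not a saddle-node. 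For $\lambda>0$ the field $(-x_2,x_1^2-\lambda)$ is divergence free, with a center and a saddle, so Sotomayor's saddle-node normal form cannot be matched to it. In (e), your reduction gives $X^s=X^+_\lambda$, whose linear part at $\lambda=0$ is the identity, not the Hopf field displayed in the statement. The paper's proof of (d) makes exactly your choice ($X_0^-=(1,0,1)$, $X_0^+=(-x_2,x_1^2,0)$), so you have reproduced its route, but that route does not land in $\Xi_1(5)$ or $\Xi_1(6)$. What works is to keep $X^\pm h$ nonzero with opposite signs and put the planar model in the tangential components. For example, $X^-=(0,0,1)$ and $X^+_\lambda=(x_1^2-\lambda,-x_2,-1)$ give $M=M^s$ and $X^s_\lambda=(x_1^2-\lambda,-x_2,0)$; do the same with the Hopf field. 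Only then does \cite{Sotomayor1974} apply.

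\textbf{Items (a)--(c).} In (a)--(b), your reduction is sound: straighten $X^+$ to $(1,0,f)$, apply Morse or $A_3$ to $f|_M$, and transport along orbits. It is also more informative than the paper's one-line appeal to its submersions. The step that fails is the one you treat as a formality, the $\partial_\lambda$ transversality check.

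At a critical point of $X^+h|_M$ lying on $S_{X^+}$ one automatically has $(X^+)^2h=d(X^+h)(X^+)=0$. Hence, near $X_0$, the lips/beak-to-beak stratum is $\{\eta=0\}$ with $\eta(X)=X^+h(p_X)$, where $p_X$ is the Morse critical point of $X^+h|_M$. For $f=x_1^2+\lambda x_2\pm x_2^2$ you get $p_{X_\lambda}=(0,\mp\lambda/2)$ and $\eta(X_\lambda)=\mp\lambda^2/4$, so the family is tangent to the stratum. Concretely, for every $\lambda\neq0$ the tangency set is a conic with cusps at $(0,0)$ and $(0,\mp\lambda)$, and the other side of the transition never occurs.

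In (b), $(X^+_\lambda)^2h=3x_1^2+2\lambda x_1=3(x_1+\lambda/3)^2-\lambda^2/3$. So for every $\lambda\neq0$, $S_{X^+_\lambda}$ carries two cusps, at $x_1=0$ and $x_1=-2\lambda/3$, where $(X^+_\lambda)^3h=\pm2\lambda$. No local reduction can make these families normal forms of a generic unfolding. You need a constant term $\lambda$ in (a) and a term $\lambda x_1$ in (b). The paper's submersion argument does not settle this: its $\eta(X)=\xi(X,p_X)$ is evaluated at a point chosen so that $\xi(X,p_X)=0$, so it is identically zero.

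In (c), there are three problems.
\begin{enumerate}
\item The complex-case family keeps its equilibrium at the origin for all $\lambda$. Shifting the real part therefore does not move it off $M$, and $\eta(X_\lambda)=\rho(p_{X^+_\lambda})\equiv0$. Only the translation $x_3\mapsto x_3-\lambda$ unfolds $\Xi_1(4)$.
\item Eigenvector transversality alone does not let you choose the Hartman--Grobman conjugacy to preserve $M$ while also matching $S_{X^+}$ and the sliding field with its pseudo-equilibrium. That is the actual content of \cite{SotomayorTeixeira1988}, which the paper cites rather than proves.
\item Transversality of $X_0^-=(a,b,c)$ to $M$ requires $c\neq0$, not $a\neq0$.
\end{enumerate}
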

The unfoldings of the singularities of codimension one are illustrated in Figures \ref{lips}, \ref{beac_to_beac}, \ref{swallowtail}, \ref{saddle}, \ref{Node} and \ref{Focus}, \ref{saddle-node} and \ref{hopf}, respectively.

\begin{figure}[!htb]
	\begin{center}
		\begin{overpic}[width=6. in]{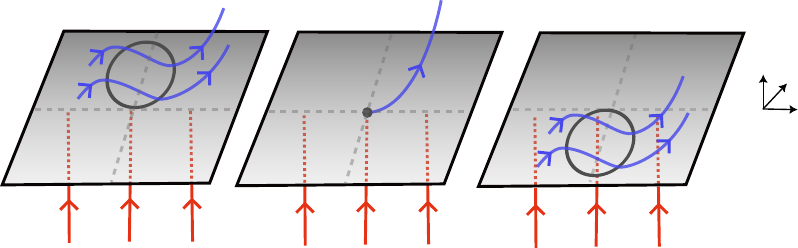}
			\put(10,28){$M$}\put(40,28){$M$} \put(70,28){$M$} \put(12,-2){$\lambda<0$}\put(41,-2){$\lambda=0$}\put(70,-2){$\lambda>0$} \put(100,16){$x_1$}\put(99,20){$x_2$}\put(95,22){$x_3$}
		\end{overpic}\hspace{0.5cm}
	\end{center}
	\caption{Unfolding of the lips singularity.}\label{lips}
\end{figure}

\begin{figure}[!htb]
	\begin{center}
		\begin{overpic}[width=6. in]{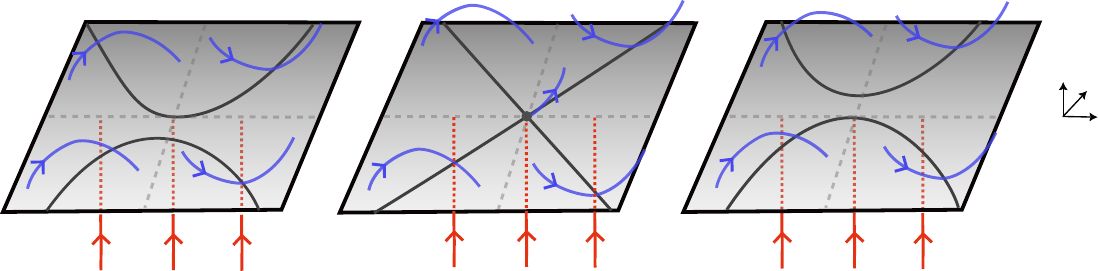}
			\put(10,24){$M$}\put(40,24){$M$} \put(71,24){$M$} \put(12,-2){$\lambda<0$}\put(43,-2){$\lambda=0$}\put(74,-2){$\lambda>0$} \put(101,13){$x_1$}\put(99,16){$x_2$}\put(96,17){$x_3$}
		\end{overpic}\hspace{0.5cm}
	\end{center}
	\caption{Unfolding of the beak to beak singularity.}\label{beac_to_beac}
\end{figure}

\begin{figure}[!htb]
	\begin{center}
		\begin{overpic}[width=6. in]{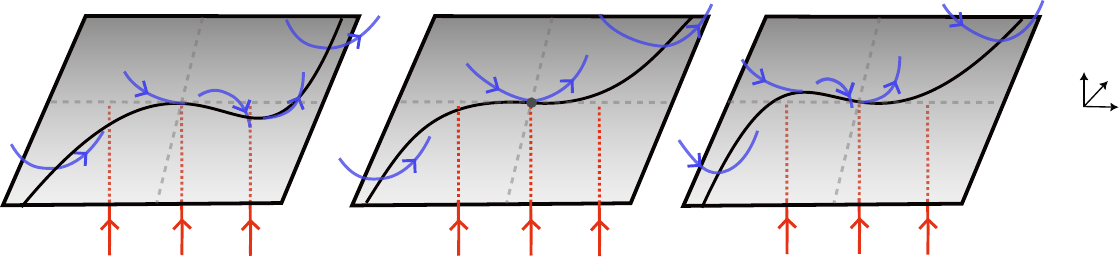}
			\put(10,22){$M$}\put(41,22){$M$} \put(71,22){$M$} \put(12,-2){$\lambda<0$}\put(42,-2){$\lambda=0$}\put(73,-2){$\lambda>0$} \put(101,13){$x_1$}\put(99,16){$x_2$}\put(96,17){$x_3$}
		\end{overpic}\hspace{0.5cm}
	\end{center}
	\caption{Unfolding of the swallowtail singularity.}\label{swallowtail}
\end{figure}

\begin{figure}[!htb]
	\begin{center}
		\begin{overpic}[width=6. in]{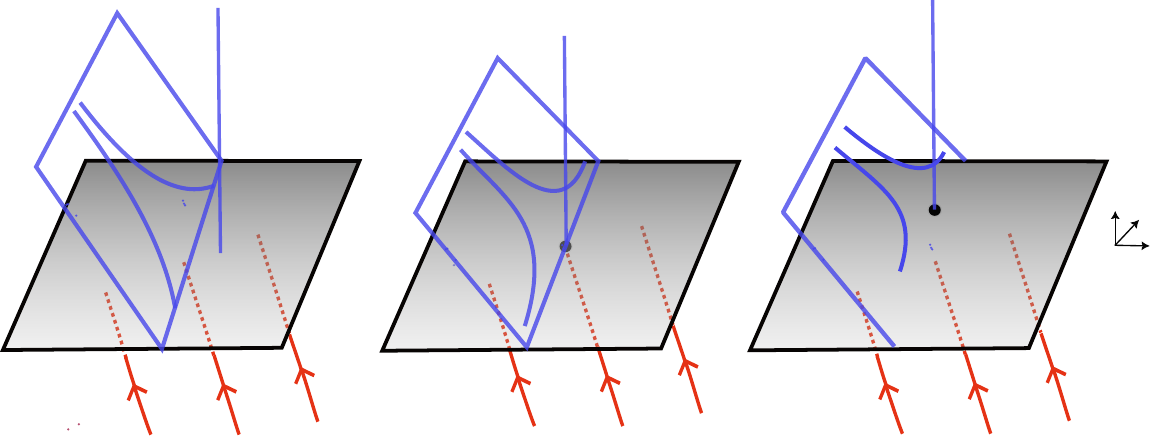}
				\put(25,25){$M$}\put(57,25){$M$}\put(89,25){$M$}\put(13,-3){$\lambda<0$}\put(46,-3){$\lambda=0$}\put(78,-3){$\lambda>0$}\put(100.5,17){$x_1$}\put(99,20){$x_2$}\put(96,21){$x_3$}
			\end{overpic}\hspace{0.5cm}
		\end{center}
		\caption{Unfolding of the saddle singularity.}\label{saddle}
	\end{figure}	
	
	\begin{figure}[!htb]
		\begin{center}
			\begin{overpic}[width=6. in]{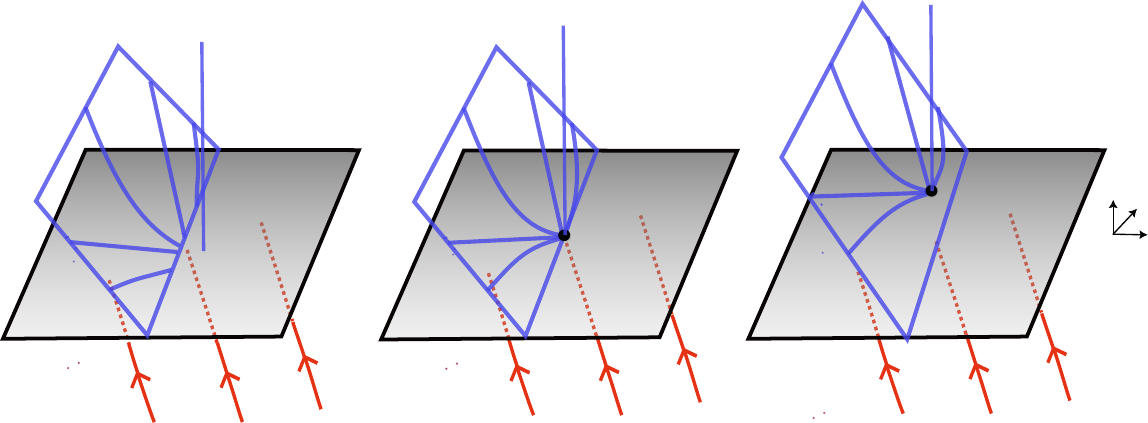}
					\put(25,25){$M$}\put(57,25){$M$}\put(89,25){$M$}\put(13,-3){$\lambda<0$}\put(46,-3){$\lambda=0$}\put(78,-3){$\lambda>0$}\put(100.5,17){$x_1$}\put(99,20){$x_2$}\put(96,21){$x_3$}
				\end{overpic}\hspace{0.5cm}
			\end{center}
			\caption{Unfolding of the node singularity.}\label{Node}
		\end{figure}	
		
		\begin{figure}[!htb]
			\begin{center}
				\begin{overpic}[width=6. in]{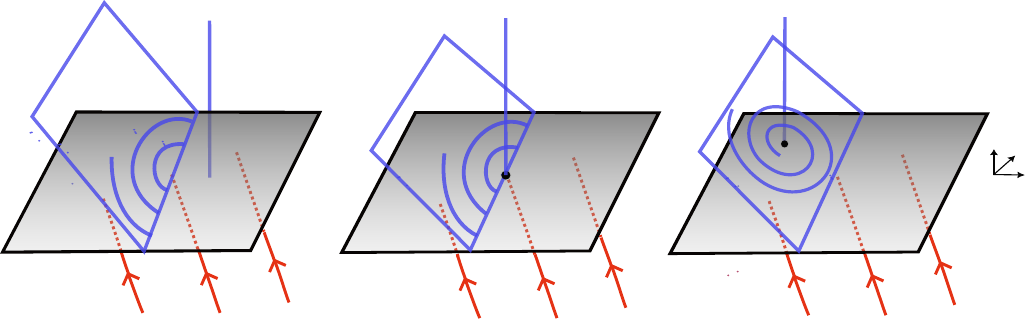}
						\put(25,22){$M$}\put(57,22){$M$}\put(89,22){$M$}\put(13,-3){$\lambda<0$}\put(46,-3){$\lambda=0$}\put(78,-3){$\lambda>0$}\put(100.5,14){$x_1$}\put(99,17){$x_2$}\put(96,18){$x_3$}
					\end{overpic}
				\end{center}
				\caption{Unfolding of the focus singularity.}\label{Focus}
			\end{figure}

			\begin{figure}[!htb]
				\begin{center}
					\begin{overpic}[width=6. in]{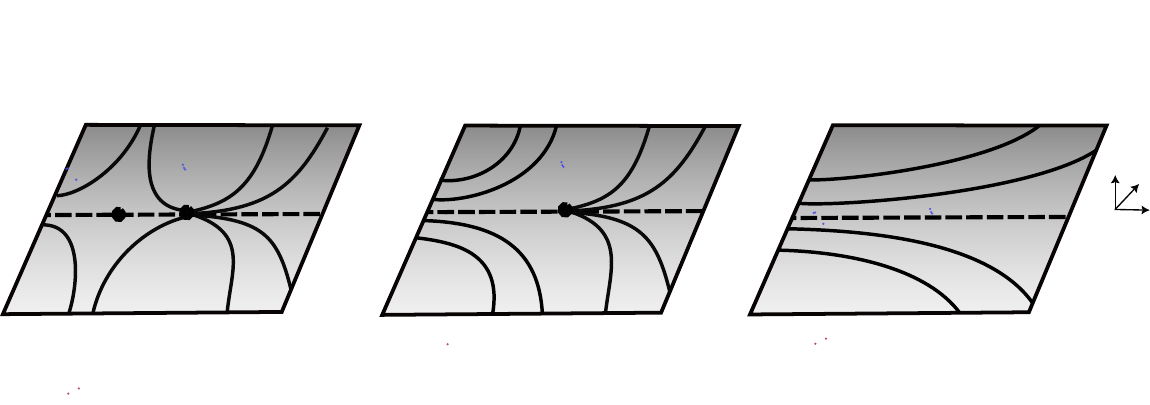}
						\put(10,26){$M$}\put(44,26){$M$}\put(76,26){$M$} \put(13,3){$\lambda<0$}\put(46,3){$\lambda=0$}\put(78,3){$\lambda>0$}\put(100.5,16.5){$x_1$}\put(99,19.5){$x_2$}\put(96,20.5){$x_3$}
					\end{overpic}
				\end{center}
				\caption{Unfolding of the saddle-node singularity.}\label{saddle-node}
			\end{figure}
			
			\begin{figure}[!htb]
				\begin{center}
					\begin{overpic}[width=6. in]{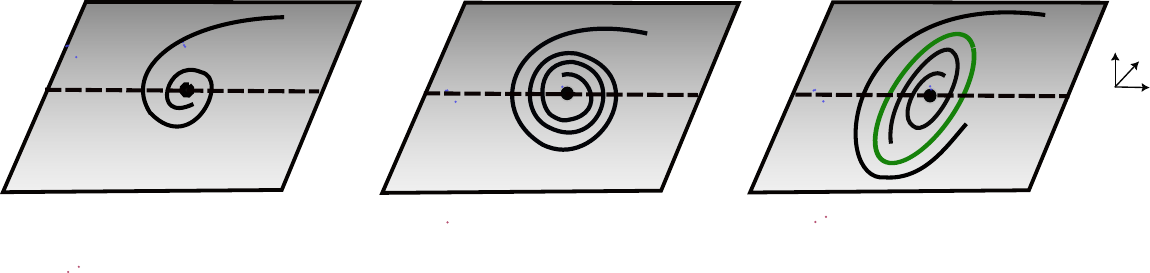}
							\put(10,24){$M$}\put(44,24){$M$}\put(76,24){$M$} \put(13,3){$\lambda<0$}\put(46,3){$\lambda=0$} \put(78,3){$\lambda>0$}\put(100.5,16.5){$x_1$}\put(99,19.5){$x_2$}\put(96,20.5){$x_3$}
						\end{overpic}
					\end{center}
					\caption{Unfolding of the Hopf singularity.}\label{hopf}
				\end{figure}

\begin{remark}
In the case where $X_0 \in \dis \bigcup_{i=1}^3 \Xi_1(i)$, if we choose a local coordinate system such that the smooth vector fields $X_0^{\pm}$ are tubular, then the corresponding unfoldings in these cases are:
	\begin{itemize}
		\item [$(a)$] If $X_0 \in [\Xi_1(1) \cup \Xi_1(2) ]$ then its topological normal form is $X_{\lambda}= (X^+_\lambda, X_0^-)$ where $X^+_\lambda(x_1,x_2,x_3)=(1,0, \lambda x_2), X_0^-(x_1,x_2,x_3)=(0,0,\pm 1)$ and $h(x_1, x_2, x_3)=x_3-(x_1^3 \pm x_1 x_2^2)$.
		
		\item [$(b)$] If $X_0 \in \Xi_1(3)$ then its topological normal form is $X_{\lambda}= (X^+_\lambda, X_0^-)$ where $X^+_\lambda(x_1,x_2,x_3)=(1,\lambda x_1, 0),$ $X_0^-(x_1,x_2,x_3)=(0,0,\pm 1)$ and $h(x_1, x_2, x_3)=x_3-(x_1^4 + x_1 x_2)$.
	\end{itemize}
\end{remark}

\section{Subsets of codimension zero and one of 3D PSVFs}\label{secao-codzeroUm}

\subsection{Classification of singularities of codimension zero of $X\in\Omega^r$}\label{subsecao-classificacao-sing}

If the origin is a two-fold singularity, then the flow of $X^+$ can induce an involution $\gamma_{X^+}\colon M \to M$, see Subsection~\ref{secao-primeiro-retorno}. In this case, there exist local connections between the regions $M^s$ and $M^e$ through the trajectories of $X^+$, and certain transversality conditions must be satisfied along these connections. If the origin is an elliptic two-fold singularity, then a first return map $\varphi_X$ is defined in a neighborhood of the origin. For the case in which the origin is a two-fold singularity, as stated in \cite{GomideTeixeira2020}, we define the following additional conditions:

\begin{itemize}
	\item [{\bf (P)}] If the origin is a parabolic two-fold singularity of $X\in \Omega^r$, then the germ of the involution $\gamma_{X^+}$ at 0 associated to $X^+$ satisfies:
	\begin{itemize}
		\item [(a)] $\gamma_{X^+}(S_{X^-}) \pitchfork S_{X^-}$ at the origin;
		\item [(b)] $X^s$ and $\gamma_{X^+}^*(X^s)$ are transversal at each point of $M^s\cap \gamma_{X^+}(M^e)$; 
		\item [(c)] $\gamma_{X^+}(S_{X^-}) \pitchfork X^s$ at the origin.
	\end{itemize}
	
	\item [{\bf (E)}] If the origin is an elliptic two-fold singularity of $X\in \Omega^r$, then the germ of the first return map $\varphi_X$ at 0 associated to $X$ has a fixed point at 0 of saddle type with both local invariant manifolds $W^{s,u}_{loc}$ contained in $M^c$.
\end{itemize}
In the following, we present, by considering the intrinsic properties of each case, the subsets of codimension zero and one PSVFs. We define the subset
\[
\Xi_0 = \dis \bigcup_{i=1}^4 \Xi_0(i),
\]
where

$\Xi_0(1) =\{X\in \Omega^r; 0 \mbox{ is a regular point of both } X^{\pm}\}$;

$\Xi_0(2) =\{X\in \Omega^r; 0 \mbox{ is a fold singularity of } X^{+} \mbox{ and a regular point of } X^-\}$;

$\Xi_0(3) =\{X\in \Omega^r; 0 \mbox{ is a cusp singularity of } X^{+} \mbox{ and a regular point of } X^-\}$;

$\Xi_0(4) =\{X\in \Omega^r; 0 \mbox{ is a two-fold singularity of } X \mbox{ and it satisfies the conditions {\bf (P)} and {\bf (E)}} \}$.


%
%
%
%

\subsection{Some subsets of codimension one of $X\in\Omega^r$}\label{subsecao-classificacao-sing-cod1}

From \cite{TeixeiraGomide2018}, the set $\Xi_0$ is locally structurally stable at the origin. Moreover, it is not a residual set in $\Omega^r$. We intend to violate each of the conditions that define the set $\Xi_0$, in a way similar to what was done in Subsection~\ref{subsecao-campobordo}.

Therefore, in the following analysis, we consider the subsets of PSVFs that violate, in a stable way, each of the previous conditions defining the subset $\Xi_0$, assuming that at least one of the vector fields is transversal to the switching surface $M$. We denote by $\Omega^T$ the set of PSVFs $X=(X^+,X^-)$ for which \emph{both} $X^{\pm}$ are tangent to $M$ at the origin. The analysis of codimension-one subsets in $\Omega^T$, for instance the case in which the origin is a fold--cusp singularity, will be carried out in a forthcoming work. In this way, we obtain a collection of sets in
\[
\Omega_1^r=\Omega^r\setminus\bigl(\Xi_0 \cup \Omega^T\bigr).
\]
One of the purposes of this work is to provide an open dense set in $\Omega_1^r$, namely $\Xi_1$.

We emphasize that, in order to define the codimension-one submanifold $\Xi_1 \subset \Omega^r$ stated in Section~\ref{secao-main-results}, we impose certain conditions on the eigenvalues and eigenvectors of $DX^+(0)$ and $DX^s(0)$. In what follows, we present results and examples that support a discussion of which generic conditions imposed on the smooth vector field $X^+$ are inherited by the sliding vector field $X^s$ (see Lemmas~\ref{lemma_equlibrio_Filippov} and~\ref{lemma_transversalidade}) and, on the other hand, which generic conditions are not inherited by $X^s$ (see Example \ref{Exemplo-Singular}).


We choose local coordinates $x=(x_1,x_2,x_3)$ around the origin such that $X^+=(X_1^+,X_2^+,X_3^+)$ satisfies $X_3(x)=x_2$ and $h(x)=x_3$, so $M=h^{-1}(0)$ is locally a hyperplane containing the origin.

\begin{lemma}\label{lemma_equlibrio_Filippov}
If $p\in M$ is a singular point of $X^+$ then it is a singular point of $X^s$.
\end{lemma}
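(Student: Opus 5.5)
The argument is a direct substitution into the formula for the rescaled sliding vector field \eqref{definicao-campodeslizante}. We work with $X^s$ rather than the normalized field $\widetilde{X}^s$ of \eqref{eq campo filippov}, since $X^s$ is defined everywhere on $M$, including points where $X^-h=X^+h$ and the normalizing denominator vanishes.

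Let $p\in M$ with $X^+(p)=0$. The first step is to note that the tangency function also vanishes there:
\[
X^+h(p)=\langle X^+(p),\nabla h(p)\rangle=0 .
\]
In the chosen coordinates this is simply $X^+_3(p)=0$, with $h(x)=x_3$. Substituting into \eqref{definicao-campodeslizante} gives
\[
X^s(p)=X^-h(p)\,X^+(p)-X^+h(p)\,X^-(p)=X^-h(p)\cdot 0-0\cdot X^-(p)=0 .
\]
Both terms vanish separately, because each contains either the factor $X^+(p)$ or the factor $X^+h(p)$. So $p$ is a singular point of $X^s$, regardless of the value of $X^-$ at $p$.

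The only point needing care is interpreting ``singular point of $X^s$'' correctly. The point $p$ lies in $S_{X^+}$, so it is not in the open set $M^s\cup M^e$ but on its boundary. We therefore use the convention stated after \eqref{definicao-campodeslizante}: $X^s$, or $-X^s$, is $C^r$-extended beyond the boundary of $M^s\cup M^e$. The polynomial expression \eqref{definicao-campodeslizante} is exactly this extension and is defined and smooth on all of $M$, so the computation above is legitimate.

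If one also wants $X^s$ viewed as a vector field tangent to $M$, note that its normal component is $X^-h\,X^+h-X^+h\,X^-h\equiv0$. Hence $X^s$ is tangent to $M$ on the whole of $M$, and its vanishing at $p$ is a genuine zero of a tangent field on $M$.

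No real obstacle is expected. The statement is immediate once one commits to the rescaled field rather than the normalized one, for which the quotient could be undefined at $p$.
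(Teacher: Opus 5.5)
Your proof is correct and is precisely the paper's argument, which simply notes that the claim follows directly from \eqref{definicao-campodeslizante}: $X^+(p)=0$ forces $X^+h(p)=0$, so both terms of $X^-h(p)X^+(p)-X^+h(p)X^-(p)$ vanish. One harmless imprecision is your claim that $p\in S_{X^+}$ lies on the boundary of $M^s\cup M^e$; this need not hold, since if $X^+h|_M$ has a strict extremum at $p$, a punctured neighbourhood of $p$ in $M$ can consist entirely of crossing points. It does not affect the argument, because, as you say, the rescaled formula defines $X^s$ on all of $M$.
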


The proof of the previous result is straightforward and it follows directly from the expression \eqref{definicao-campodeslizante}.

\begin{lemma}\label{lemma_transversalidade}
If the eigenspaces of $DX^+(p)$ are transversal to $M$ at $p$ then the eigenspaces of $DX^s(p)$ are transversal to $S_{X^+}$, the boundary of $X^s$.
\end{lemma}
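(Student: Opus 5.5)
We work in the coordinates fixed just before Lemma \ref{lemma_equlibrio_Filippov}: $h(x)=x_3$, $M=\{x_3=0\}$, and $X^+=(X^+_1,X^+_2,X^+_3)$. Suppose $p=0\in M$ is a singular point of $X^+$ whose eigenspaces are transversal to $M$. As in $\Xi_1(4)$, condition (H)(a), we take $X^-$ to be transversal to $M$ at $0$, so $X^-h(0)=X^-_3(0)=c\neq 0$.

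The plan has three steps.

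\textbf{Step 1: linearise $X^s$ at $0$.} We restrict \eqref{definicao-campodeslizante} to $M$ and compute its Jacobian at $0$. Since $X^+(0)=0$, the product rule kills every term containing $X^+(0)$, leaving
\[
DX^s(0)=X^-_3(0)\,DX^+(0)-X^-(0)\,\nabla X^+_3(0)^{T},
\]
restricted to tangent vectors $v\in T_0M$. Because $X^s$ is tangent to $M$ along the sliding region, we then project onto $M$ along $X^-(0)$. Let $A=DX^+(0)$ and let $\pi$ denote the projection onto $T_0M$ along $X^-(0)$. The tangential linear part of $X^s$ is then $c\,\pi A|_{T_0M}$, which is a rank-one correction of $A$.

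\textbf{Step 2: relate eigenvectors of $\pi A|_{T_0M}$ to those of $A$.} The boundary $S_{X^+}$ is the curve $\{x\in M : X^+_3(x)=0\}$, with tangent line $\ker d X^+_3(0)\cap T_0M$. For $v\in T_0M$, the vector $\pi A v$ differs from $Av$ only by a multiple of $X^-(0)$, and that multiple is fixed by the third component $e_3^TAv=dX^+_3(0)v$. Hence the vectors $v\in T_0M$ with $dX^+_3(0)v=0$, i.e.\ the tangent directions of $S_{X^+}$, satisfy $\pi Av=Av$.

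Now suppose, for contradiction, that an eigenvector $w$ of $DX^s(0)$ lies tangent to $S_{X^+}$. Then $Aw=\pi Aw=\mu w$ with $w\in T_0M$, so $w$ is an eigenvector of $A$ lying in $T_0M$. This contradicts the hypothesis that the eigenspaces of $DX^+(0)$ are transversal to $M$. For complex eigenvalues we apply the same argument to the real invariant plane. If that plane is a subspace of $T_0M$, it would be $A$-invariant and contained in $M$, again a contradiction. Since $T_0M$ is $2$-dimensional, a $2$-dimensional invariant plane of $DX^s(0)$ is all of $T_0M$, so the transversality statement is only meaningful for the $1$-dimensional eigenspaces.

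\textbf{Main obstacle.} The difficulty is making the projection and rescaling precise. The field $X^s$ is tangent to $M$ only on $M^{s}\cup M^{e}$, while the $C^r$-extension across $S_{X^+}$ is taken componentwise. We must also check that the normalisation $X^+_3=x_2$ is compatible with the assumption $X^+(0)=0$. This adapted chart forces $S_{X^+}=\{x_2=0\}\cap M$, and it requires $dX^+_3(0)\neq 0$, which follows from hyperbolicity of $A$ together with (H)(d).

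With this chart, Step 2 reduces to a concrete check. Writing $A=(a_{ij})$ with third row $(0,1,0)$, the first two components of $X^s$ on $M$ give the $2\times 2$ matrix
\[
c\begin{pmatrix}a_{11}&a_{12}\\ a_{21}&a_{22}\end{pmatrix}-\begin{pmatrix}0&X^-_1(0)\\ 0&X^-_2(0)\end{pmatrix}.
\]
Its action on $e_1$ is $c(a_{11},a_{21})$. So $e_1$ is an eigenvector of this matrix exactly when $a_{21}=0$, and then $e_1$ is also an eigenvector of $A$, since $a_{31}=0$. This contradicts (H)(d), which completes the argument.
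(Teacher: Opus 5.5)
Your proof is correct and is essentially the paper's argument: in the chart $h=x_3$, $X_3^+=x_2$ you obtain the same matrix $A^s$, and transversality of its eigenvectors to $S_{X^+}=\{x_2=0\}$ reduces to $X_3^-(0)\,\frac{\partial X_2^+}{\partial x_1}(0)\neq 0$, which is the content of the paper's conditions (i)--(ii). The differences are in execution only. You derive $\frac{\partial X_2^+}{\partial x_1}(0)\neq 0$ directly from (H)(d) instead of citing Sotomayor--Teixeira, and you test only the tangent direction $e_1$ of $S_{X^+}$, so the paper's extra condition (iii) on $e_2$ is not needed. One small correction: the chart needs $dX_3^+(0)|_{T_0M}\neq 0$, and this follows from (H)(d) alone, whereas hyperbolicity only gives the weaker $dX_3^+(0)\neq 0$.
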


\begin{proof}
We set $p=(0,0,0)$ as being the origin. From \cite{SotomayorTeixeira1988} if the eigenspaces of $DX^+(p)$ are transversal to $M$ at the origin then $\frac{\partial X_2^+}{\partial x_1}(0,0,0)\neq0$. The sliding vector fields are given by 
	$$
	X^s(x_1,x_2)=(X^+_1(x_1,x_2,0) X^-_3(x_1,x_2,0)-x_2X^-_1(x_1,x_2,0),X^+_2(x_1,x_2,0) X^-_3(x_1,x_2,0)-x_2X^-_2(x_1,x_2,0)).
	$$
	Note that $X^s(0,0)=(0,0)$ and the linearized vector field of $X^s(x_1,x_2)$ at $(0,0)$ is $A^s.(x_1,x_2)^T$ where
	$$
	A^s=D_X^s(0,0)=
	\left(
	\begin{array}{cc}
		X^-_3(0,0,0)\frac{\partial X^+_1}{\partial x_ 1}(0,0,0) & -X^-_1(0,0,0)+X^-_3(0,0,0)\frac{\partial X^+_1}{\partial x_ 2}(0,0,0)\vspace{0.2cm}\\
	X^-_3(0,0,0)\frac{\partial X^+_2}{\partial x_ 1}(0,0,0) & -X^-_2(0,0,0)+X^-_3(0,0,0)\frac{\partial X^+_2}{\partial x_ 2}(0,0,0)
	\end{array}
	\right).
	$$
The boundary $S_{X^+}$ of $X^s$ is the fold line associated with $X^+$, that is, $S_{X^+}=\{(x_1,x_2)\,;\,x_2=0\}$. Therefore, the eigenvectors $v_1$ and $v_2$ of $A^s$ are transversal to $S_{X^+}$ if they satisfy $\langle \widetilde{v}, v_i \rangle \neq 0$, $i=1,2$, where $\widetilde{v}=(1,0)$ is the gradient of the curve $(x_1,0)$ on $M$. After computing $v_1$ and $v_2$, one can see that $\langle \widetilde{v}, v_i \rangle \neq 0$ for both $i=1$ and $i=2$ provided that one of the following conditions holds:
\begin{itemize}
	\item[(i)] $X^-_3(x,y,0)=0$, which cannot occur because, in this case, the
	trajectories of $X^-$ are contained in $M$;
	\item[(ii)] $\dfrac{\partial X_2^+}{\partial x_1}(0,0,0)=0$, which cannot occur
	by hypothesis;
	\item[(iii)] $X^-_1(x,y,0)=X^-_3(x,y,0)\dfrac{\partial X_3^+}{\partial x_2}(0,0,0)=0$,
	which cannot occur because otherwise $X^s$ would have a continuum of equilibrium
	points through $(0,0,0)$.
\end{itemize}
Therefore, the result is proved.

\end{proof}

The next example shows that other generic conditions of $X^+$ are not induced to $(X^+)^s$ as the existence of singular points and transversality are. More precisely, we prove that the origin is a isolated singular point for $X^+$ but that it not inherited by the vector field $X^s$ which has a continuum of equilibrium points.

\begin{example}\label{Exemplo-Singular}
Consider the linear PSVFs $X=(X^+,X^-)$ with $X^\pm=(X_1^\pm,X_2^\pm,X_3^\pm)$ where  $X^+(x_1,x_2,x_3)=(x_2,  -2x_1-x_2, x_2+3x_3)$ and $ X^-(x_1,x_2,x_3)=(a, b,c)$. We consider $M=\{x_3=0\}$ therefore $c\neq0$ to $X^-$ be transversal to $M$ at the origin. Without loss of generality we set $c<0$. Clearly, the origin is an isolated singular point for $X^+$. Moreover, it is of focus-saddle type since the eigenvalues associated with the linearized vector field (associated to $X^+$) have eigenvalues $\lambda_{1,2}=\frac{1}{2}(-1+i\sqrt{7})$ and $\lambda_3=3$. The extended sliding vector field defined on $M$ which is $(X^+)^s(x_1,x_2)=((-a+c)x_2,-2cx_1-(b+c)x_2)$.
	
As we have seen in Lemma \ref{lemma_equlibrio_Filippov}, since the origin is a singular point for $X^+$, then it is also a singular point for $(X^+)^s$. However, we notice that it is isolated if, and only if, $a\neq c$ and $c\neq0$. If $a=c$ or $c=0$, then there exists a continuum of singular points over the straight line $x_1=-\frac{(b+c)x_2}{2c}$ or $x_2=0$, respectively.
\end{example}

The previous example shows that there exists PSVFs for which their generic conditions on hyperbolicity/transversa\-li\-ty/algebraic multiplicity one of the singular points of $X^+$ are not induced to $X^s$. In this way, the extra conditions on the vector field $X^s$ given in condition {\bf (H)}, stated in Section \ref{secao-main-results} are necessary.

In the next examples, we show that $\Xi_1(5)$ and $\Xi_1(6)$ are not empty.

\begin{example} [Non-hyperbolic saddle-node singular point]
Consider the linear PSVFs $X=(X^+,X^-)$ where
$$
\begin{array}{ll}
X^+(x_1,x_2,x_3)   &=\left(a_1 x_1+a_2 x_3+a_1(a_1+a_3),a_4 x_3,a_5 x_3-1\right),\\
X^-(x_1,x_2,x_3)   &=\left(\mu+a_3 x_1+a_6 x_3-a_1(a_1+a_3),-x_2+a_7 x_3,-\dfrac{1}{a_1}x_1+a_8x_3+1\right),
\end{array}
$$
with $a_1\neq0$ and $M=\{x_3=0\}$. Notice that $X^+(0,0,0)=(a_1(a_1+a_3),0,-1)$ and $X^-(0,0,0)=(\mu-a_1(a_1+a_3),0,1)$, that is, $X^+$ and $X^-$ are regular at $(0,0,0)$. Moreover, we get $X^+h(0,0,0)=-1$ and $X^-h(0,0,0)=1$ so $(0,0,0)$ is a sliding point since it satisfies the condition $X^+h(0,0,0)X^-h(0,0,0)=-1<0$. In particular, $X^{\pm}(0,0,0)$ are transversal ot $M$ at $(0,0,0)$. The extended sliding vector field $X^s$ writes $X^s(x_1,x_2)=(\mu-x_1^2,-x_2)$, that is, $X^s$ has an saddle-node singular point at $(0,0)$ for $\mu=0$, that is, a quasi-generic singular point. Therefore, $X\in\Xi_1(5)\neq\emptyset$.
\end{example}

\begin{example}[Hopf singular point]
Consider $M=\{x_3=0\}$ and the PSVFs $X=(X^+,X^-)$, where 
\[
\begin{array}{ll}
X^+(x_1,x_2,x_3)    &=(-y+x(\mu-1), x+ y(\mu-1), x^2+y^2-1), \\ X^-(x_1,x_2,x_3)    &=(x, y, 1).
\end{array}
\]
 The sliding vector field $X^s(x_1,x_2)=(\mu x_1-x_2-x_1(x_1^2+x^2),x_1+\mu x_2-x_2(x_1^2+x_2^2))$, which is a classical normal form for the Hopf bifurcation occurring at the origin for $\mu=0$.
\end{example}


\section{Proof of Theorem \ref{teorema-lista-cod1}}\label{proof-MainResults}

\subsection{Construction of the submersion}\label{secao-submersao}
In each of the following cases, we exhibit explicitly the submersion that characterizes the codimension one submanifold of PSVFs for each case. The construction was inspired in the work \cite{SotomayorTeixeira1988}. 

\subsubsection{Lips and beak to beak cases.} We consider $X_{0}=(X_{0}^+,X_{0}^-)\in \Xi_{1}(1)\cup \Xi_{1}(2)$. Therefore, $X_{0}^+h(0)=(X_{0}^+)^{2}h(0)=0$, $(X_{0}^+)^{3}h(0)\neq 0$, and $\{Dh(0),DX_{0}^+h(0),D(X_{0}^+)^{2}h(0)\}$ is linearly dependent. Moreover, the origin is a non-degenerate point of $X_{0}^+h$, and hence we have:

\begin{equation*}
	\begin{array}{lllll}
		X_1^+(0) \neq 0,  & X_2^+(0)=0, & X_3^+(0)=0, & \dfrac{\partial X_3^+}{\partial x_{1}}(0)=0,  &\dfrac{\partial^{2} X_3^+}{\partial x_{2}\partial x_{3}}(0) = 0,  \\ \\
		\dfrac{\partial X_3^+}{\partial x_{2}}(0)=0, 
		&\dfrac{\partial^{2} X_3^+}{\partial x_{1}\partial x_{2}}(0) = 0,  & \dfrac{\partial^{2} X_3^+}{\partial x_{1}^2 (0)}\neq 0, & \dfrac{\partial^{2} X_3^+}{\partial x_{2}^2 (0)} \neq 0. &
	\end{array}
\end{equation*}
We define 
$$
\begin{array}{ccc}
	\hspace{0,6 cm}\xi: [\Xi_1(1) \cup \Xi_1(2)] \times \mathbb{R}^2 \rightarrow \mathbb{R}\\
	\hspace{-0,3 cm} (X,(x_{1},x_{2}))  &\hspace{-1,2 cm} \mapsto &  \det [Dh(x_{1},x_{2},0),DX^+h(x_{1},x_{2},0),D(X^+)^{2}h(x_{1},x_{2},0)].
\end{array}
$$
Since $0$ is a beak to beak or lips singularity of $X_0$, we have $\xi(X_{0},0)=0$. On the other hand, 

\[
D\xi(X_{0},0)= \left( 0, -X_1^+(0) \dfrac{\partial^{2} X_3^+}{\partial x_{1}^{2}}(0) \dfrac{\partial^{2} X_3^+}{\partial x_{2}^2}(0), 0\right) \neq 0.
\]

Thus, by the Implicit Function Theorem, there exist neighborhoods $\mathcal{V}_{X_{0}}$ of $X_{0}$ in $\Omega_1^r$, $\mathcal{U}$ of $0 \in \mathbb{R}^2$ and a unique $C^r$ map $\widetilde{\alpha}:\mathcal{V}_{X_{0}} \rightarrow \mathcal{U} \subset \mathbb{R}^2$, satisfying $\widetilde{\alpha}(X)=0$, such that $\xi(X,\widetilde{\alpha}(X))=0$. Therefore $p_{X}(\widetilde{\alpha}(X),0)$ is a lips or beak to beak singularity of $X$. Consider the map
$$
\begin{array}{ll}
	\eta:  &\mathcal{V}_{X_{0}}  \rightarrow \mathbb{R}\\
	& X \mapsto  \eta(X)=\xi(X,p_{X}).
\end{array}
$$
Note that $\Xi_{1}(1) \cup \Xi_{1}(2)=[\eta^{-1}(0)\cap \mathcal{V}_{X_{0}}]$. It remains to prove that for all $X=(X^+,X^-) \in \eta^{-1}(0)\cap \mathcal{V}_{X_{0}}$ we have that $d \eta(X)$ is surjective. Consider $\gamma: (-\lambda_{0},\lambda_{0}) \rightarrow \Omega_{1}$ a smooth curve given by
\begin{equation*}
\gamma(\lambda)=X_{\lambda}=(X_{0}^+ + \lambda(0,0,x_2), X_{0}^-).
\end{equation*}\label{desd_beak_to_beak_lips_rev_1-3}
Note that $\eta(X_{0})=0$ and 
\[
\begin{array}{ll}
	\eta(\gamma(\lambda))     &=  - \left( \frac{\partial X_3^+}{\partial x_2} \right)^2 
	\frac{\partial X_2^+}{\partial x_1} 
	+ \frac{\partial X_3^+}{\partial x_1} \Bigg(
	X_3^+ \frac{\partial^2 X_3^+}{\partial x_2 \partial x_3} 
	+ X_2^+ \frac{\partial^2 X_3^+}{\partial x_2^2} 
	+ \frac{\partial X_1^+}{\partial x_2} \frac{\partial X_3^+}{\partial x_1} 
	+ X_1^+ \frac{\partial^2 X_3^+}{\partial x_1 \partial x_2}
	\Bigg) \\[6pt]
	&\quad + \frac{\partial X_3^+}{\partial x_2} \Bigg(
	\Big( \frac{\partial X_2^+}{\partial x_2} - \frac{\partial X_1^+}{\partial x_1} \Big) \frac{\partial X_3^+}{\partial x_1}
	- X_3^+ \frac{\partial^2 X_3^+}{\partial x_1 \partial x_3}
	- X_2^+ \frac{\partial^2 X_3^+}{\partial x_1 \partial x_2}
	- X_1^+ \frac{\partial^2 X_3^+}{\partial x_1^2}
	\Bigg) \\[6pt]
	&\quad + \lambda \Bigg[
	\Big( \frac{\partial X_2^+}{\partial x_2} 
	+ x_2 \frac{\partial^2 X_3^+}{\partial x_2 \partial x_3} 
	- \frac{\partial X_1^+}{\partial x_1} \Big) 
	\frac{\partial X_3^+}{\partial x_1}
	- X_3^+ \frac{\partial^2 X_3^+}{\partial x_1 \partial x_3} \\
	&\qquad\quad
	- \frac{\partial X_3^+}{\partial x_2} \left( 
	2 \frac{\partial X_2^+}{\partial x_1} 
	+ x_2 \frac{\partial^2 X_3^+}{\partial x_1 \partial x_3}
	\right)
	- X_2^+ \frac{\partial^2 X_3^+}{\partial x_1 \partial x_2}
	- X_1^+ \frac{\partial^2 X_3^+}{\partial x_1^2}
	\Bigg] 
	+ O(\lambda^2).
\end{array}
\]
Therefore,
\[
\begin{array}{ll}
	d \eta(X) \cdot \gamma'(0)   &=\displaystyle\lim_{\lambda  \to 0}\dfrac{\eta(\gamma(\lambda))-\eta(Z_{0})}{\lambda} =
	-X_1^+(0)\dfrac{\partial^{2} X_3^+}{\partial x_{1}^2}(0) \neq 0,
\end{array}
\]
concluding the proof in this case. 	

\subsubsection{Swallowtail case.} We consider $X_{0}=(X_{0}^+,X_{0}^-) \in \Xi_1(3)$. Therefore $X^+_{0}h(0)=(X^+_{0})^{2}h(0)=(X^+_{0})^{3}h(0)=0$, $(X^+_{0})^{4}h(0) \neq 0$ and the origin is a regular point of $X^+_{0}h$, therefore we have:
\begin{equation*}
	\left.
	\begin{array}{lllll}
		X_1^+(0) \neq 0, & X_1^+(0)=0, & X_3^+(0)=0, & 	\dfrac{\partial X_3^+}{\partial x_{1}}(0)=0,  &\dfrac{\partial X_1^+}{\partial x_{1}}(0)=0, \\
		\dfrac{\partial X_3^+}{\partial x_{3}}(0)=0,   &\dfrac{\partial X_3^+}{\partial x_{2}}(0)\neq 0,    &\dfrac{\partial^{2} X_1^+}{\partial x_{1}^2}(0)=0,     &\dfrac{\partial^{2} X_3^+}{\partial x_{1}^2}(0)=0,   &\dfrac{\partial^{3} X_3^+}{\partial x_{1}^{3}}(0)\neq 0.
	\end{array}	\right.
\end{equation*}
We define 
$$
\begin{array}{ccc}
	\hspace{0,6 cm}\xi: \Omega_{1} \times \mathbb{R}^2 \rightarrow \mathbb{R}\\
	\hspace{-0,3 cm} (X,(x_{1},x_{2}))  &\hspace{-1,2 cm} \mapsto &  (X^+)^{3}h(x_{1},x_{2},0).
\end{array}
$$
Since $0$ is a swallowtail singularity of $X$, we have $\xi(X_{0}^+,0)=0$. On the other hand, 
\[
\begin{array}{ll}
D\xi(X_{0},0)    &=  \Bigg( X_1^+(0)^2  \dfrac{\partial^3 X_3^+}{\partial x_{1}^3}(0), \left( \dfrac{\partial X_1^+}{\partial x_{2}}(0) \right)^2  \dfrac{\partial X_3^+}{\partial x_{2}}(0) + \dfrac{\partial X_1^+}{\partial x_{3}}(0) \left( \dfrac{\partial X_3^+}{\partial x_{2}}(0) \right)^2 + 2 X_1^+(0) \dfrac{\partial X_1^+}{\partial x_{2}}(0)\, \dfrac{\partial^2 X_3^+}{\partial x_{1}\partial x_{2}}(0) \\
	&+ X_1^+(0)\, \dfrac{\partial X_3^+}{\partial x_{2}}(0) \Biggl( 2 \dfrac{\partial^2 X_3^+}{\partial x_{1}\partial x_{3}}(0) + \dfrac{\partial^2 X_1^+}{\partial x_{1}\partial x_{2}}(0) \Biggr) + \dfrac{\partial X_1^+}{\partial x_{1}}(0) \, \dfrac{\partial^2 X_3^+}{\partial x_{1}\partial x_{2}}(0) + X_1^+(0) \, \dfrac{\partial^3 X_3^+}{\partial x_{1}^2 \partial x_{2}}(0) , \\
	& \dfrac{\partial X_1^+}{\partial x_{3}}(0) 	\Biggl( \dfrac{\partial X_1^+}{\partial x_{2}}(0) \, \dfrac{\partial X_3^+}{\partial x_{2}}(0) + 2 X_1^+(0)   \dfrac{\partial^2 X_3^+}{\partial x_{1}\partial x_{2}}(0) \Biggr) + X_1^+(0)\, \dfrac{\partial X_3^+}{\partial x_{2}}(0)\, \dfrac{\partial^2 X_1^+}{\partial x_{1}\partial x_{3}}(0)  
	+ \dfrac{\partial X_1^+}{\partial x_{1}}(0) \, \dfrac{\partial^2 X_3^+}{\partial x_{1}\partial x_{3}}(0) \\
	&+ X_1^+(0)\, \dfrac{\partial^3 X_3^+}{\partial x_{1}^2 \partial x_{3}}(0)
	\Bigg) \neq 0.
\end{array}
\]
Thus, by the Implicit Function Theorem, there exist neighborhoods $\mathcal{V}_{X_{0}}$ of $X_{0}$ in $\Omega_{1}^r$, $\mathcal{U}$ of $0 \in \mathbb{R}^2$ and a unique $C^r$ map $\widetilde{\alpha}:\mathcal{V}_{X_{0}} \rightarrow \mathcal{U} \subset \mathbb{R}^2$, satisfying $\widetilde{\alpha}(X)=0$, such that $\xi(X,\widetilde{\alpha}(X))=0$. Therefore $p_{X}(\widetilde{\alpha}(X),0)$ is a swallowtail singularity of $X$. Consider the map
$$
\begin{array}{ccc}
	\hspace{0,6 cm}\eta: \mathcal{V}_{X_{0}} \hspace{0,5 cm} \rightarrow \mathbb{R}\\
	\hspace{-0,3 cm} X  &\hspace{-1,2 cm} \mapsto &  \eta(X)=\xi(X,p_{X}).
\end{array}
$$
Note that $\Xi_{1}(3)=[\eta^{-1}(0)\cap \mathcal{V}_{X_{0}}]$. It remains to prove that for all $X=(X^+,X^-) \in \eta^{-1}(0)\cap \mathcal{V}_{X_{0}}$ we have that $d \eta(X)$ is surjective. Define $\gamma: (-\lambda_{0},\lambda_{0}) \rightarrow \Omega^r$ a smooth curve given by
\begin{equation}\label{desd_rabo-andor_rev_1-3}
\gamma(\lambda)=X_{\lambda}=(X_{0}^+ +\lambda (0, 0, x_1^2), X_{0}^-).
\end{equation}
Consider \eqref{desd_rabo-andor_rev_1-3}, note that $\eta(X_{0})=0$ and
\[
\begin{array}{ll}
	\eta(\gamma(\lambda))    &= X_3^+ \Bigg[ 
	\left(\frac{\partial X_3^+}{\partial x_3}\right)^2
	+ X_3^+ \frac{\partial^2 X_3^+}{\partial x_3^2}
	+ \frac{\partial X_2^+}{\partial x_3} \frac{\partial X_3^+}{\partial x_2} 
	+ X_2^+ \frac{\partial^2 X_3^+}{\partial x_2 \partial x_3} 
	+ \frac{\partial X_1^+}{\partial x_3} \frac{\partial X_3^+}{\partial x_1} 
	+ X_1^+ \frac{\partial^2 X_3^+}{\partial x_1 \partial x_3} 
	\Bigg] \\
	&+ X_2^+ \Bigg[
	\frac{\partial X_3^+}{\partial x_3} \frac{\partial X_3^+}{\partial x_2} 
	+ \frac{\partial X_2^+}{\partial x_2} \frac{\partial X_3^+}{\partial x_2} 
	+ X_3^+ \frac{\partial^2 X_3^+}{\partial x_2 \partial x_3} 
	+ X_2^+ \frac{\partial^2 X_3^+}{\partial x_2^2} 
	+ \frac{\partial X_1^+}{\partial x_2} \frac{\partial X_3^+}{\partial x_1} 
	+ X_1^+ \frac{\partial^2 X_3^+}{\partial x_1 \partial x_2} 
	\Bigg] \\
	&+ X_1^+ \Bigg[
	\frac{\partial X_3^+}{\partial x_2} \frac{\partial X_2^+}{\partial x_1} 
	+ \frac{\partial X_3^+}{\partial x_3} \frac{\partial X_3^+}{\partial x_1} 
	+ \frac{\partial X_1^+}{\partial x_1} \frac{\partial X_3^+}{\partial x_1} 
	+ X_3^+ \frac{\partial^2 X_3^+}{\partial x_1 \partial x_3} 
	+ X_2^+ \frac{\partial^2 X_3^+}{\partial x_1 \partial x_2} 
	+ X_1^+ \frac{\partial^2 X_3^+}{\partial x_1^2} 
	\Bigg] \\
	&+ \Bigg[ 2 (X_1^+)^2 
	+ 2 x_1 X_3^+ \frac{\partial X_1^+}{\partial x_3} 
	+ 2 x_1 X_1^+ \frac{\partial X_3^+}{\partial x_3} 
	+ x_1^2 \left(\frac{\partial X_3^+}{\partial x_3}\right)^2
	+ 2 x_1^2 X_3^+ \frac{\partial^2 X_3^+}{\partial x_3^2} 
	+ 2 x_1 X_2^+ \frac{\partial X_1^+}{\partial x_2} \\
	&+ x_1^2 \frac{\partial X_2^+}{\partial x_3} \frac{\partial X_3^+}{\partial x_2} 
	+ 2 x_1^2 X_2^+ \frac{\partial^2 X_3^+}{\partial x_2 \partial x_3} 
	+ 2 x_1 X_1^+ \frac{\partial X_1^+}{\partial x_1} 
	+ x_1^2 \frac{\partial X_1^+}{\partial x_3} \frac{\partial X_3^+}{\partial x_1} 
	+ 2 x_1^2 X_1^+ \frac{\partial^2 X_3^+}{\partial x_1 \partial x_3}	\Bigg] \lambda + \mathcal{O}(\lambda^2)
\end{array}
\]
and derivating the map $\eta$ along the curve $\gamma(\lambda)$ we get
$$
d \eta(X) \cdot \gamma'(0)=\displaystyle\lim_{\lambda  \to 0}\dfrac{\eta(\gamma(\lambda)) - \eta(X_{0})}{\lambda} =
2 [X_1^+(0)]^2\neq 0,
$$
proving the surjectivity of the application $d\eta(X)$. 	

\subsubsection{Singular point-regular case}

Consider $X_0=(X^+_0,X^-_0)\in \Xi_1(4)$. Then $X_0^+(0)=0$, $X^-_0\neq 0$ and $0$ is a hyperbolic singular point satisfying the conditions on $\Xi_1(4)$. From Thom's transversality Theorem  and the hyperbolicity of $0$ (Proposition 3.1 of \cite{PalisMelo1982})), there exist neighborhoods $\V_{X_0}=V_{X^+_0}\times V_{X^-_0}$ of $X_0\in\Omega^r=\mathfrak{X}^r\times \mathfrak{X}^r$, $U_p$ of $0$ in $(\mathbb{R}^3,0)$ and a unique $C^r$ function $\phi: \V_{X^+_0} \to U_p \subset \R^3$  with $\phi(X^+_0)=0$, such that each $X=(X^+,X^-)\in \V_{X_0}$ satisfies
\begin{itemize}
	\item $X^+\in V_{X^+_0}$ has a unique hyperbolic singular point $\phi(X^+)=p_{X^+}\in U_p$;
	\item $X^-\in V_{X^-_0}$ is transversal to $M$ on $U_p\cap M$.
\end{itemize}
Moreover, shrinking $\mathcal{V}_{X_0}$ if necessary and using the fact that the sliding vector field $X^s$ is a linear combination of $X^+$ and $X^-$, there exists a function $\phi^s: \V_{X_0} \to U_p \cap M$  with $\phi(X_0)=0$, such that each $X\in \V_{X_0}$ satisfies
\begin{itemize}
	\item $X^s$ has a unique hyperbolic singular point $\phi^s(X)=p_{X^s}\in U_p\cap M$.
\end{itemize}

Making $\mathcal{V}_{X_0}$  even smaller if necessary, we obtain one of the following conditions:
\begin{enumerate}
	\item[(1)] if $p_{X^+}$ and $p_{X^s}$ are real singular points of $X^+$ and $X^s$, respectively, then:
	\begin{enumerate}
		\item[(1a)] the eigenvalues of $DX^+(p_{X^+})$ and $DX^s(p_{X^s})$ are of algebraic multiplicity 1;
		\item[(1b)] the eigenvectors associated to $DX^+(p_{X^+})$ and $DX^s(p_{X^s})$ are transversal to $M$ and $\partial X^s:=\{(x_1,x_2)\in M;x_2=0\}$ (the boundary of $X^s$), respectively.
		\item[(1c)] the real parts of the non-conjugated eigenvalues of $DX^+(p_{X^+})$ are distinct. 
	\end{enumerate}
	\item[(2)] if $p_{X^+}$ is a real singular point of $X^+$ and $p_{X^s}$ is a virtual singular point of $X^s$ then the generic conditions presented in $(1)$ for $X^+$ hold and $X^s$ is transversal to $\partial X^s$ on $\partial X^s\cap U_p$;
	\item[(3)] if $p_{X^+}$ is a virtual singular point of $X^+$ and $p_{X^s}$ is a real singular point of $X^s$ then the generic conditions presented in $(1)$ for $X^s$ hold and $X^+$ is transversal to $M$ on $U_p\cap M$;
	\item[(4)] if $p_{X^+}$ and $p_{X^s}$ are virtual singular points of $X^+$ and $X^s$, respectively, then the generic transversality conditions on $X^+$ and $X^s$ presented in $(2)$ and $(3)$ hold.
\end{enumerate}
We notice that in each of the bullets $(1)-(4)$ above generic conditions hold on the neighborhood $\mathcal{V}_{X_0}$. Now, following \cite{SotomayorTeixeira1988}, let $N$ be an one-dimensional section transversal to $M$ at $0$ and $\rho:U_p\longrightarrow N$ the projection on $N$ which is parallel to $M$. That is possible since $M$ is locally the hyperplane $\{(x_1,x_2,x_3)\in\mathbb{R}^3;\,x_3=0\}$. Without loss of generality, we set $N=\left<(0,0,1)\right>$ the straight line spanned by $(0,0,1)$ which is orthogonal to $M$. Therefore $\rho$ is the canonical projection on the first variable $\rho(x_1,x_2,x_3)=x_3$. Consider the function 
\[
\begin{array}{rl}
	\eta:  &\V_{X_0} \to \R	\\
	&(X^+,X^-) \mapsto \rho(p_{X^+}).
\end{array}
\]
By the construction of the function $\eta$, it is clear that $\Xi_1(4)\subset[\eta^{-1}(0) \cap \V_{X_0}]$. The inclusion $[\eta^{-1}(0) \cap \V_{X_0}]\subset \Xi_1(4)$ is also clear because every $X\in\V_{X_0}$ fulfill the generic conditions $(a)-(d)$ of $\Xi_1(4)$ and since $\eta(X)=0$, we have that $X\in \Xi_1(4)$, concluding the proof of statement $(a)$.

Now we shall prove that for all $X=(X^+, X^-)\in  \eta^{-1}(0)\cap \V_{X_0}$ we obtain that $d\eta({X}) $ is surjective. In fact, consider the smooth curve given by
\begin{equation*}\label{familia-singular-regular}
	\begin{array}{ll}
		\gamma(\lambda)  &=X_\lambda(x_1,x_2,x_3) = (X^+_\lambda(x_1,x_2,x_3), X^-_0(x_1,x_2,x_3))
	\end{array}
\end{equation*}
where $X^+_\lambda(x_1,x_2,x_3)=X^+_0(x_1,x_2,x_3-\lambda)$. Besides, we get that
\[
d\eta(X)\cdot\gamma'(0) = \dfrac{d }{d \lambda}\eta(\gamma(\lambda))\big|_{\lambda=0} = \lim_{\lambda \to 0}\dfrac{\eta(\gamma(\lambda)) - \eta(X)}{\lambda}=\lim_{\lambda \to 0}\dfrac{\eta(\gamma(\lambda))}{\lambda}=1, 
\]
since $ \eta(X)=\rho(p_{X^+})=\rho(0)=0$ and $\eta(\gamma(\lambda))=\eta(X_\lambda)=\rho(p_{X^+_\lambda})=\rho(0,0,\lambda)=\lambda$, concluding the proof of this case.

\subsubsection{Regular-regular-saddle-node singular point case}\label{secao-sela-no}
In order to prove the results for a vector field $X\in\Xi_1(5)$, we remark that $p$ is a quasi-generic singular point of $X^s$ which is a planar vector field defined on the $2-$surface $M$. Thus, we can apply the results from Sotomayor (see \cite{Sotomayor1974}) in order to exhibit the function $\eta$ fulfilling the hypotheses $(a)$ and $(b)$ for the vector field $X^s$. For the sake of completeness, next we present a summary of the construction of $\eta$, one can see \cite{Sotomayor1974} for the specific details.

Consider $X_0=(X^+_0,X^-_0)\in \Xi_1(5)$, then $X_0^+(0)\neq0$, $X^-_0(0)\neq 0$ and $0$ is a quasi-generic singular point of $X_0^s$. Due to the transversality Theorem, there exist neighborhoods $\V_{X_0}=V_{X^+_0}\times V_{X^-_0}$ of $X_0\in\Omega^r=\mathfrak{X}^r\times \mathfrak{X}^r$, $U_p$ of $0$ in $\mathbb{R}^3$ and a unique $C^r$ function $\phi: \V_{X_0} \to U_p \subset \R^3$  with $\phi(X_0)=0$, such that each $X=(X^+,X^-)\in \V_{X_0}$ satisfies $X^{\pm}(q)\neq0$ for every $q\in U_p\cap M$.

Assume that $0$ is a saddle-node of $X_0^s$ and let $\lambda_1=0$ and $\lambda_2\neq0$ be the two eigenvalues of $DX^s_0$ defined on the tangent space $T_0\mathbb{R}^3$ which is the linearization of $X_0^s$ at $0$. Let $T_1$ and $T_2$ be the eigenspaces associated to $\lambda_1$ and $\lambda_2$, respectively and set $\pi_1:T_p\longrightarrow T_1$ the projection map on $T_1$. We have $DX^s_0(v)=[V,X_0^s](p)$ where $[V,X]$ is the Lie bracket of $V$ and $X$, and for $v\in T_1$, $v\neq0$, we define $\Delta_1(X_0^s,p,v)=\pi_1(V,[V,X_0^s])(p)$. Because $p=0$ is a saddle node for $X_0^s$, there exists $v\neq0$ such that $\Delta_1(X_0^s,0,v)\neq0$. Under these conditions, let $(z_1,z_2)$ be a coordinate system around $0\in M$ satisfying $z_1(0)=z_2(0)=0$ and $\frac{\partial}{\partial z_i}(p)\in T_i$. In these coordinates, setting $X_0^s=(X^s_1,X^s_2)$ we obtain
\begin{equation}\label{sela-no-normal-form}
	\begin{array}{l}
		X^s_1(z_1,z_2)=\Delta_1 z_1^2+bz_1z_2+cz_2^2+\mathit{o}(z_1^2+z_2^2),\\
		X^s_2(z_1,z_2)=\sigma z_2+\alpha z_1^2+\beta z_1z_2+\gamma z_2^2+\mathit{o}(z_1^2+z_2^2),
	\end{array}
\end{equation}
where $\sigma(X,p)=\lambda_2$ and $\Delta_1\left(X,p,\frac{\partial}{\partial z_1}(p)\right)=\frac{\partial^2X^s_1}{\partial z_1^2}(0,0)$ is nonzero because $0$ is a saddle-node. We assume that $\sigma(X,p)<0$ and $\Delta_1\left(X,p,\frac{\partial}{\partial z_1}(p)\right)>0$.

Let $N_0^0=N_0\cap M$ and $B_0$ be neighborhoods of $0$ and $X$, respectively, where $N$ is a neighborhood of $0$ in $\mathbb{R}^3$. By continuity, there exists a neighborhood $B_0^s$ of $X_0^s$ such that, for every $Y\in B$, we get $Y^s=(Y_1^s,Y_2^s)$ is in $B_0^s$. Moreover, one obtain $\frac{\partial Y^s_2}{\partial z_2}<0$, $\Delta_1(Y^s,v_{Y^s})>0$ for a suitable $v_{Y^s}$ and $trace(DY^s(q))<0$ in $N_0^s$, where $q\in N_0^s$ is any non-generic singular point of $Y^s\in B_0^s$. Also, $\Delta_1(Y^s,q,v_{Y^s})>0$, that is, $q$ is a saddle node of $Y^s$.

Define $F:B_0^s\times N_0^s\longrightarrow \mathbb{R}$ by $F(Y^s;z_1,z_2)=Y^s_2(z_1,z_2)$. We also define functions $\mathcal{X}_1^+$, $\mathcal{X}_2^+$, and $\mathcal{X}_3^+$ as below, which allows us to define the desired function for the saddle-node case:
\begin{itemize}
	\item[(a)] Using the Implicit Function Theorem for $F$, we assure the existence of a $C^r$ function $\mathcal{X}_1^+:B_1\times I_1\longrightarrow I_2$, $B_1$ and $I_1,I_2$ neighborhoods of $X_0^s$ and $0$, respectively, such that $\mathcal{X}_1^+(X_0^s,0)=0$ and $F(Y^s;z_1,\mathcal{X}_1^+(Y^s,z_1))=Y^s_2(z_1,z_2)=0$ for $(Y^s,z_1)\in B_1\times I_1$ and $z_2\in I_2$ only if $z_2=\mathcal{X}_1^+(Y^s,z_1)$.
	
	\item[(b)] Define the $C^{r}$ function $\mathcal{X}_2^+:B_1\times I_1\rightarrow \mathbb{R}$ by $\mathcal{X}_2^+(Y^s,z_1)=Y^s_1(z_1,\mathcal{X}_1^+(z_1,Y^s))$ which satisfies $\frac{\partial \mathcal{X}_2^+}{\partial z_1}(X_0^s,0)=0$ and $\frac{\partial^2 \mathcal{X}_2^+}{\partial z_1^2}(X_0^s,0)\neq0$.
	
	\item[(c)] Applying the Implicit Function Theorem for the $C^{r-1}$ function $\frac{\partial \mathcal{X}_2^+}{\partial z_1}$ we obtain a function $\mathcal{X}_3^+:B\longrightarrow I_1$ satisfying $\mathcal{X}_3^+(X_0^s)=0$ and $\frac{\partial \mathcal{X}_2^+}{\partial z_1}(Y^s,\mathcal{X}_3^+(Y^s))=0$ for $Y^s\subset B_1$, $z_1\in I_1$ only if $z_1=\mathcal{X}_3^+(Y^s)$, where $B\subset B_0^s$.
\end{itemize}
The desired function is obtained by setting $\eta:B\longrightarrow \mathbb{R}$ by
$$
\eta(Y)=\mathcal{X}_2^+(Y^s,\mathcal{X}_3^+(Y^s))=Y^s_1(\mathcal{X}_3^+(Y^s),\mathcal{X}_1^+(Y^s,\mathcal{X}_3^+(Y^s))).
$$

We have that $\Xi_1(5)\subset[\eta^{-1}(0) \cap \V_{X_0}]$ by construction. On the other hand, $[\eta^{-1}(0) \cap \V_{X_0}]\subset \Xi_1(5)$ holds because, from the definitions of $\mathcal{X}_1^+$, $\mathcal{X}_2^+$ and $\mathcal{X}_3^+$, $Y\in B$ is such that $Y^s$ has a singular point in $N_0^s$ if, and only if, $z_2=\mathcal{X}_1^+(Y^s,z_1)$ and $\mathcal{X}_2^+(Y^s,z_1)=0$, which is a saddle-node because $\Delta_1>0$ and $\sigma<0$.

Now we shall prove that for all $X=(X^+,X^-)\in  \eta^{-1}(0)\cap \V_{X_0}$ then $d\eta({X}) $ is subjective. Indeed, the associated vector field $X^s=(X_1^s,X_2^s)$ is a smooth vector field presenting a saddle-node singular point at the origin. We notice that, assuming $X^+=(X_1^+,X_2^+,X_3^+)$ and $X^-=(X_1^-,X_2^-,X_3^-)$, in order to produce the associate sliding vector field $X^s$, the components of $X^+$ and $X^-$ are related as follows:
\begin{equation}\label{relations}
	X_1^+=\dfrac{X_3^+ X_1^s-X_1^- X_2^s}{X_3^+ X_2^--X_1^- X_3^-},\quad X_2^+=\dfrac{X_3^- X_1^s-X_2^- X_2^s}{X_3^+ X_2^--X_1^- X_3^-},
\end{equation}
where $(X_3^+ X_2^--X_1^- X_3^-)(x_1,x_2,x_3)\neq0$. Consider the smooth curve $\gamma: (-\lambda_0, \lambda_0) \to \Omega^r$ given by
\begin{equation}\label{familia-saddle-node}
	\gamma(\lambda) =X_{\lambda}= (X^+ _{\lambda}, X^-) = \left(X^+ + \lambda \left(\dfrac{X_3^+}{X_3^+ X_2^--X_1^- X_3^-},\dfrac{X_3^-}{X_3^+ X_2^--X_1^- X_3^-},0\right), X^-\right),
\end{equation}
where $X^+_0=X^+$ and $X^+$, $X^-$ are defined by the relations \eqref{relations}. One can see that, the sliding vector field $X_{\lambda}^s$ associated with $X_{\lambda}$ is now $X_{\lambda}^s= (X_1^s+\lambda, X_2^s)$. Considering \eqref{familia-saddle-node} and derivating the map $\eta$ along the curve $\gamma(\lambda)$ we get
\[
d\eta(X)\cdot\gamma'(0) = \dfrac{d }{d \lambda}\eta(\gamma(\lambda))\big|_{\lambda=0} = \lim_{\lambda \to 0}\dfrac{\eta(\gamma(\lambda)) - \eta(X_0)}{\lambda}=1,
\]
for all $(x_1, x_2)$ in a neighborhood of the origin, because $\eta(X_0)=0$ and $\eta(\gamma(\lambda)) =  X_\lambda^s(X_3^+(X),$ $X_1^+(X,$ $X_3^+(X)))=X_1^s(0,0)+\lambda=\lambda$, concluding the proof.

\subsubsection{Regular-regular-Hopf singular point case}\label{Secao-Hopf}
Consider $X_0=(X_0^+,X_0^-)\in \Xi_1(6)$ and take suitable neighborhoods and functions as in the first paragraph of the proof of the previous subsection. Let $\sigma(X_0^s,p)$ and $\Delta(X_0^s,p)$ denote the trace and the determinant of $D[X_0^s]_p$, respectively, where $X_0^s$ is the sliding vector field associated with $X_0$. Then the imaginary part of the eigenvalues of $D[X_0^s]_p$ satisfies
\[
(\sigma(X_0^s,p))^2-4\Delta(X_0^s,p)<0
\quad \text{and} \quad
\sigma(X_0^s,p)=0.
\]
Moreover, for any sliding vector field $Y^s$, by continuity, if $\sigma(Y^s,q)=0$ and $Y^s(q)=0$, then $\rho'_{X_0^s}(0)=1$ and $(\rho_{X_0^s})^{(3)}(0)\neq0$, where $\rho$ denotes the first return map associated with $p$.

Let $(z_1,z_2)$ be a coordinate system around $p$ with $z_1(p)=z_2(p)=0$. One can show from the Implicit Function Theorem that, for every neighborhood $B$ of $X$, there exists a function $P$ defined in a neighborhood $B^s$ of $X_0^s$ such that, every $Y\in B$ has an associated $Y^s=(Y_1^s,Y_2^s)\in B^s$ with equilibrium point $q=P(Y^s)$. Define the function $\eta:B\rightarrow\mathbb{R}$ as
$$
\eta(Y)=\sigma(Y^s,P(Y^s))=\dfrac{\partial Y^s_1}{\partial z_1}(P(Y^s))+\dfrac{\partial Y^s_2}{\partial z_2}(P(Y^s)),
$$
which is the trace of the matrix $DY^s_{P(Y^s)}$. Take $N^s$ a neighborhood of $0$ such that the boundary $\partial N^s$ of $N^s$ is transversal to every $Y^s\in B^s$. Under the conditions $(a)-(d)$, it can be shown that $Y^s$ has one singular point $P(Y^s)$ on $N^s$ which is generic if, and only if, $\eta(Y^s)\neq0$, and a generic, stable periodic trajectory when $\eta(Y^s)>0$. Nevertheless, we denote $M_1$ the manifold with boundary $M\setminus\partial N^s$, so the restriction $X^s_1=X_0^s|_{M_1}$ is structurally stable. Moreover, shrinking $B^s$ if necessary, every vector field $Y^s\in B^s$ is such that $Y_1=Y^s|_{M_1}$ is topologically equivalent to $X_1^s$ and it is structurally stable due to the openness of the structurally stable vector fields on $M_1$. Therefore, we have that $\Xi_1(6)=[\eta^{-1}(0) \cap \V_{X_0}]$.

It remains to prove that $d\eta_X(X)\neq0$. The associated vector field $X^s=(X_1^s,X_2^s)$ is a smooth vector field presenting a Hopf singular point at the origin. Assuming $X^+=(X_1^+,X_2^+,X_3^+)$ and $X^-=(X_1^-,X_2^-,X_3^-)$, in order to produce the associate sliding vector field $X^s$, the components of $X^+$ and $X^-$ are related as in equation \eqref{relations}. In this case, consider 
\begin{equation}\label{familia-Hopf}
	\gamma(\lambda) =X_{\lambda}= (X^+ _{\lambda}, X^-) = \left(X^+ + \lambda \left(\dfrac{z_1X_3^+}{X_3^+ X_2^--X_1^- X_3^-},\dfrac{z_1X_3^-}{X_3^+ X_2^--X_1^- X_3^-},0\right), X^-\right),
\end{equation}
where $X^+_0=X^+$. The sliding vector field $X_{\lambda}^s$ associated with $X_{\lambda}$ is now $X_{\lambda}^s= (X_1^s+\lambda z_1, X_2^s)$. Considering \eqref{familia-Hopf} we get
\[
d\eta(X)\cdot\gamma'(0) = \dfrac{d }{d \lambda}\eta(\gamma(\lambda))\big|_{\lambda=0} = \lim_{\lambda \to 0}\dfrac{\eta(\gamma(\lambda)) - \eta(X_0)}{\lambda}=1,
\]
for all $(z_1, z_2)$ in a neighborhood of the origin. In fact, $\eta(X_0)=0$ and $ \eta(\gamma(\lambda)) = \eta(X_{\lambda})= \sigma(X^s_{\lambda},P(X^s_{\lambda}))=\dfrac{\partial X_{\lambda}^+}{\partial z_1}(P(X^s_{\lambda}))+\dfrac{\partial X^-_0}{\partial z_2}(P(X^s_{\lambda}))=\lambda+\dfrac{\partial X_1^s}{\partial z_1}(0,0)+\dfrac{\partial X_2^s}{\partial z_2}(0,0)=\lambda$. This finishes the proof for this case.

\subsection{Openness of $\Xi_1\subset \Omega_1^r$} We recall that $\Xi_1 = \dis\bigcup_{i=1}^6 \Xi_1(i)$ and the openness of each subset $\Xi_1(i), i=1, \dots, 6$ in $\Omega_1^r$ follows by Section \ref{secao-submersao}.

\subsection{Density of $\Xi_1\subset \Omega_1^r$}

Let $X=(X^+,X^-)\in \Omega_1^r$ and assume that the origin is a degenerated singular point of $X$, so $X^-(0)\neq0$ and $X^+(0)$ satisfies one of the following conditions.

\begin{itemize}
	\item[(I)] If $X^+(0)\neq0$ and $X^+h(0)=0$, then we have the cases involving tangencies which are split into the following cases:
	\begin{itemize}
		\item[(a)] $(X^+)^2h(0)=(X^+)^3h(0)=(X^+)^4h(0)=0$  and $(X^+)^kh(0)\neq 0$ for $k\geq 5$ or
		\item[(b)] $(X^+)^2h(0)=0, (X^+)^3h(0)\neq 0, det(dh(0), d(X^+)h(0), d(X^+)^2h(0))=0$ and $rank(\mathrm{Hess}(X^+h(0)))=R<2$.
	\end{itemize}
	
\item[(II)] If $X^+(0)\neq 0$ and $X^+h(0)\neq 0$, then we have the cases involving two transversal vector fields at the origin generating a sliding vector field. The following cases occur:
\begin{itemize}
	\item[(c)] $0$ is a singular point of the planar vector field $X^s$ with real eigenvalues and codimension greater than or equal to two;
	\item[(d)] $0$ is a singular point of the planar vector field $X^s$ that is a weak focus of multiplicity greater than or equal to two.
\end{itemize}

	\item[(III)] If $X^+(0)=0$, then we have the case involving a singular point on $M$ as follows:
	\begin{itemize}
		\item[(e)] $0$ is a singular point of $X^+$ satisfying at least one of the following conditions: either $0$ is not a hyperbolic singular point of  $X^+$ or $X^s$, either the eigenvalues of $DX^+(0)$ or $DX^s(0)$ are of algebraic multiplicity equal or greater than 2, either the eigenvectors associated with $DX^+(0)$ are not transversal to $M$ at $0$.
	\end{itemize}
\end{itemize}
At this moment we divide the proof into two cases. The proof for tangencies-regular PSVFs, composed by the subsets $\Xi_1(1)\cup \Xi_1(2) \cup \Xi_1(3)$ and the singular point-regular/regular-regular-singular PSVFs, composed by the subsets $\Xi_1(4)\cup \Xi_1(5)\cup \Xi_1(6)$.

In the following, we consider the case (I) by the construction of a sequence $\{X_n\} \subset \Xi_1(1)\cup \Xi_1(2) \cup \Xi_1(3)$ such that $X_n \to X$. Consider the function $t\mapsto X_t(p)=\varphi_X(t,p)$ is a trajectory of $X$ starting at $p$.

Case $(a)$: In this case we have that $\varphi_X(t,p)$ has a root of multiplicity $k$ of $h\circ \varphi_X (t,p)=f(t,p)$. As a consequence of the Malgrange Preparation Theorem, see \cite{Izumiya2015, Malgrange1964}, there exists $C^r$ real-valued functions $a_1, \dots, a_k, b$ defined in a neighborhood of $(0,0) \in \R\times \R^3$ such that 
\[
b(f(t,p), p) = t^k+ \dis\sum_{i=1}^{k-1} t^ia_i(f(t,p),p),
\]
in the neighborhood of $(0,0)$. Consider a sequence $\{X_n\} \subset  \Xi_1(3)$ given by $X_n=X+\frac{1}{n}\left(0,0,x_1^3\right)$. In this way, we get $f_n(t,p) = h(X_{n,t}(p))=h(\varphi_{X_n}(t,p))$ satisfying
\[
\widetilde{b}(f_n(t,p), p) = \left(\dfrac{c_4}{n}+\mathcal{O}(p^1)\right)t^4+ \dis\sum_{i=1, i\neq 4}^{k-1} t^i\alpha_i(f_n(t,p),p)+ t^k,
\]
with $c_4 \neq 0, ||\alpha_i-a_i||= \frac{1}{n} \mathcal{O}(p^1)$ for $i\in \{1,2,\dots, k\}$. Note that by construction we have that $X_n \to X$.

\vs

Case $(b)$: Considering a local coordinate system, in this case we have that 
\begin{equation*}
\begin{array}{lllll}
X_1^+(0) \neq 0,  & X_2^+(0)=0, & X_3^+(0)=0, & \dfrac{\partial X_3^+}{\partial x_{1}}(0)=0,  &\dfrac{\partial X_3^+}{\partial x_{2}}(0)=0,  \\ \\
 \dfrac{\partial^{2} X_3^+}{\partial x_{1}^2 }(0)\neq 0, &\dfrac{\partial^{2} X_3^+}{\partial x_{2}\partial x_{3}}(0) = 0,	&\dfrac{\partial^{2} X_3^+}{\partial x_{1}\partial x_{2}}(0) = 0,  &  \dfrac{\partial^{2} X_3^+}{\partial x_{2}^2 }(0) = 0. &
	\end{array}
\end{equation*}
In this way, $rank(\mathrm{Hess}(X^+h(0)))<2$. In fact, $det(\mathrm{Hess}(Xh(0)))=\left[\dfrac{\partial^{2} X_3^+}{\partial x_{1}^2 }(0)\right] \left[\dfrac{\partial^{2} X_3^+}{\partial x_{2}^2 }(0)\right]=0$. Consider a sequence $\{X_n\}$ such that the
$X_n=X+\frac{1}{2n}\Big((0,0,x_2^2), (0,0,0)\Big)$ and note that 
$X_{n,1}^+(0) \neq 0,  X_{n,2}^+(0)=0, X_{n,3}^+(0)=0, \dfrac{\partial X_{n,3}^+}{\partial x_{1}}(0)=0, \dfrac{\partial X_{n,3}^+}{\partial x_{2}}(0)=0,  \dfrac{\partial^{2} X_{n,3}^+}{\partial x_{1}^2 }(0)\neq 0, \dfrac{\partial^{2} X_{n,3}^+}{\partial x_{2}\partial x_{3}}(0) = 0, \dfrac{\partial^{2} X_{n,3}^+}{\partial x_{1}\partial x_{2}}(0) = 0,  \dfrac{\partial^{2} X_{n,3}^+}{\partial x_{2}^2 }(0) \neq 0$. Therefore $\{X_n\} \subset  [\Xi_1(1) \cup \Xi_1(2)]$ and $X_n \to  X$.

The proof of bullets (c) and (d) of case (II) is as follows. Let $X$ be a PSVFs having as the sliding vector field $X^s$ fulfilling the conditions of the bullet (c). Then $X^s$ can be approximated by a vector field $\widetilde{X}^s$ having a quasi-generic singular point of saddle-node type (see \cite{Peixoto1962, Sotomayor1974}). Moreover, because $\widetilde{X}^s$ is $C^r-$close to $X^s$ and $X^s$ is a convex linear combination of $C^r-$functions, one can find a vector field $\widetilde{X}=(\widetilde{X}^+,X^-)$ which is $C^r-$close to $X=(X^+,X^-)$ in the product topology such that $\widetilde{X}^s$ is the sliding vector field associated to $\widetilde{X}$ (see the Subsections \ref{secao-sela-no} and \ref{Secao-Hopf}) so $\widetilde{X}\in \Xi_1(5)$. Analogously, if $X$ is a vector field having as sliding vector field $X^s$ fulfilling the conditions of bullet (d), we can obtain another vector field $\widehat{X}\in \Xi_1(6)$ close to $X$.

It remains to prove bullet (e) of case (III), so let $X^+$ be a vector field satisfying condition (e). If $0$ is a non-hyperbolic singular point for $X^+$, we can take $X_1^+$ close to $X^+$ such that $0$ is hyperbolic for $X_1^+$, see \cite{PalisMelo1982} for instance. If $X_1^s$ has $0$ as a non-hyperbolic singular point, as we done before, there exists $X_2^s$ close to $X_1^s$ having $0$ as hyperbolic singular point in such a way that $X_2^s$ is the sliding vector field associated to a vector field $X_2$ which is close to $X_1^+$ and $X^+$. If the eigenvectors associated with $DX_2^+(0)$ are not transversal to $M$ at $0$, we can use the density of transversal applications (Thom's Theorem) to assure the existence of a vector field $X_3^+$ which is sufficiently close to $X_2^+$ in such a way that is is also close to $X^+$. Finally, if the eigenvalues of $DX_3^+(0)$ or $DX_3^s(0)$ are of algebraic multiplicity equal or greater than two, we can again apply the Malgrange Preparation Theorem and the continuity of eigenvalues with respect to $DX_3^+(0)$ and $DX_3^s(0)$ to construct a sequence $X_n$ converging to $X_3$ in a way that the sequence $X_n=(X_n^+,X^-)$ converges to $X=(X^+,X^-)$ and $\{X_n\}\subset\Xi_1(4)$.

\subsection{Proof of items $(a)$ and $(c)$ of Theorem \ref{teorema-lista-cod1}}

The proof follows directly by the construction of the submersions for each case given in Subsection \ref{secao-submersao}.

\section{Proof of Theorem \ref{teorema-formas-normais}}\label{prova-teo2}
The proofs of bullets $(a)$ and $(b)$ of Theorem \ref{teorema-formas-normais} are direct from the constructions of the submersions associated with the subsets $\Xi_1(i), i=1,2,3$ given in the proof of Theorem  \ref{teorema-lista-cod1}. The proof of bullet $(c)$ is classical as done in \cite{SotomayorTeixeira1988}. We only add the generic condition $a\neq0$ which is responsible for the transversal contact between the trajectory of $X_0^-$ and $M$ at $0$.

The proof of bullets $(d)$ and $(e)$ are analogous, so we only present the proof of bullet $(d)$. They also follow from the proof of Theorem  \ref{teorema-lista-cod1} but in a less direct way than bullets $(a)$ and $(b)$, so we present it in what follows. If $X_0\in\Xi_1(5)$, then its sliding vector field $X_0^s=(X_1^s,X_2^s)$ has a saddle-node at $0$ which is assumed to be the classical saddle-node for such a smooth vector $X_0^s$. This is obtained by setting $\Delta=1$, $\sigma=-1$, and the other parameters vanishing in equation \eqref{sela-no-normal-form}. From equations \eqref{relations} and \eqref{familia-saddle-node}, we can obtain the expressions of the vector fields $X_\lambda^+$ and $X_0^-$, depending on the already fixed expressions of $X_1^s$ and $X_2^s$. Notice that, from equation \eqref{relations} with $X_3^+=0$ we must have $X_1^-X_3^-\neq0$, so we set $X_1^-=X_3^-=1$ and $X_2^-=0$. Hence, the vector field $X_0^-$ writes $X_0^-(x_1,x_2,x_3)=(1,0,1)$ and we take $X_0^+(x_1,x_2,x_3)=(X_1^+,X_2^+,X_3^+)=(-x_2,x_1^2,0)$. Finally, from equation \eqref{familia-saddle-node}, $X_\lambda^+$ writes
$$
X_\lambda^+(x_1,x_2,x_3)=X_0^+(x_1,x_2,x_3)+\lambda(0,-1,0)=(-x_2,x_1^2-\lambda,0).
$$
The expression of $X_\lambda^s(x_1,x_2)$ defined on $M$ can be obtained straightforwardly as  $X_\lambda^s(x_1,x_2)=(-x_2,x_1^2-\lambda,0)$ which coincides with equation \eqref{sela-no-normal-form} with the parameters fixed at the beginning of the proof.

\appendix 

\section{}\label{Apendice}

In this appendix, we introduce some notions on bifurcation theory that are important to the reader and that complement Section \ref{secao-preliminares}.

\subsection{A brief approach to bifurcation theory}\label{Secao-Bifurcacao}
The concept of structural stability  is based on the following definition:
\begin{definition}\label{equival}
Two vector fields $X$ and $\widetilde{X}$ in $\Omega^r$, with switching surface $M$ are said to be $M$-equivalent if there exists a homeomorphism  preserving orientation mapping trajectories of $X$ onto trajectories of $\widetilde{X}$, preserving the regions of $M$ and the sliding vector field.
\end{definition}

A vector field $X\in \Omega^r$ is structurally stable if it has a neighborhood $\V\subset \Omega^r$ such that $X$ is $M$-equivalent to every  $\widetilde{X}\in\V$.

A bifurcation phenomenon occurs when there is a qualitative change in the behavior of the family under a slight change of parameters. An important characteristic of any bifurcation is its {\it codimension}, that is the smaller number of parameters that must be varied for the bifurcation to take place.  In an arbitrary parameter space of a dimension equal to the codimension of the bifurcation, an unfolding of a bifurcation is a description of all dynamical behavior that may generically occur near the bifurcation.

\begin{remark}
We emphasize that the topological structure of $X$ is composed by the $X^+, X^-,$ \linebreak$ X^s$ and $\varphi_X$, when all these ingredients are defined, and all the iterations of these ingredients, see Subsections \ref{secao-campos-descontinuo} and \ref{secao-primeiro-retorno}. Consequently, note that the codimension of $X=(X^+,X^-)$ at $0$ is, at least, the sum of codimensions of $X^+$ and $X^-$ at $0$. To illustrate this behavior, in Example \ref{exemplo-cod-infinita} we exhibit a PSVFs that possesses codimension infinity whereas both, $X^+$ and $X^-$, are of codimension zero.
	
\begin{example}
Consider $X=(X^+,X^-)\in \Omega^r$ where $X^+(x_1,x_2,x_3)=(-1,1,-1)$ and $X^-(x_1,x_2,x_3)=(1,-1,1)$. Note that both, $X^+$ and $X^-$, are transversal to $M$, so they are of codimension zero, see \cite{SotomayorTeixeira1988}. The switching manifold $M$ coincides with the sliding region $M^s$. By \eqref{eq campo filippov}, the sliding vector fields is a two-dimensional smooth vector fields identically null, that is a codimension infinity smooth vector field. 
\label{exemplo-cod-infinita}\end{example}
\end{remark}

\noindent {\textbf{Acknowledgments.}} Rodrigo Donizete Euz\'ebio is  partially supported by Conselho Nacional de Desenvolvimento Cient\'ifico e Tecnol\'ogico (CNPq Grants 402060/2022-9 and 308652/2022-3). Durval Jos\'e Tonon is partially supported by Conselho Nacional de Desenvolvimento Cient\'ifico e Tecnol\'ogico (CNPq Grant 310362/2021-0). Marco Antonio Teixeira is partially supported by Conselho Nacional de Desenvolvimento Cient\'ifico e Tecnol\'ogico (CNPq Grant 301646/2022-8) and Funda\c c\~ao de Amparo \`a Pesquisa do Estado de S\~ao Paulo-FAPESP Grant  2018/13481-0.

\noindent {\textbf{Declarations of interest.}} The authors have no conflict of interest to declare.

\end{document}